\newcommand{\al}{\alpha}
\newcommand{\be}{\beta}
\newcommand{\Bd}{B_{\W_d}}
\newcommand{\diag}{\text{diag}}
\newcommand{\I}{I_{3\times3}}
\newcommand{\lam}{\lambda}
\renewcommand{\i}{\mathfrak{i}}
\renewcommand{\t}{\times}
\newcommand{\W}{{\Omega}}
\newcommand{\tr}{\text{tr}}
\newcommand{\tcb}{\textcolor{black}}
\newcommand{\no}{\nonumber}
\newcommand{\tU}{\tilde{U}}
\newcommand{\tG}{\tilde{G}}
\newcommand{\tg}{\tilde{g}}
\newcommand{\tih}{\tilde{h}}
\newcommand{\V}{\mathcal{V}}
\newcommand{\braces}[1]{\ensuremath{\left\{ #1 \right\}}}
\newcommand{\refeqn}[1]{(\ref{eq:#1})}
\newcommand{\SO}{\ensuremath{\mathsf{SO(3)}}}
\newcommand{\T}{\ensuremath{\mathsf{T}}}
\newcommand{\so}{\ensuremath{\mathfrak{so}(3)}}
\newcommand{\SE}{\ensuremath{\mathsf{SE(3)}}}
\renewcommand{\Re}{\ensuremath{\mathbb{R}}}
\newcommand{\Sph}{\ensuremath{\mathsf{S}}}
\title{\LARGE \bf
Spacecraft Position and Attitude Formation Control\\ using Line-of-Sight Observations}
\author{Tse-Huai Wu and Taeyoung Lee\authorrefmark{1}%
\thanks{Tse-Huai Wu and Taeyoung Lee, Mechanical and Aerospace Engineering, The George Washington University, Washington DC 20052. {\tt \{wu52,tylee\}@gwu.edu}}%
\thanks{\textsuperscript{\footnotesize\ensuremath{*}}This research has been supported in part by NSF under the grant CMMI-1243000 (transferred from 1029551), CMMI-1335008, and CNS-1337722.}
}
\newtheorem{lem}{Lemma}
\newtheorem{prop}{Proposition}
\newtheorem{assump}{Assumption}
\begin{document}
\allowdisplaybreaks
\maketitle \thispagestyle{empty} \pagestyle{empty}

\begin{abstract}
This paper studies formation control of an arbitrary number of spacecraft based on a serial network structure. The leader controls its absolute position and absolute attitude with respect to an inertial frame, and the followers control its relative position and attitude with respect to another spacecraft assigned by the serial network. The unique feature is that both the absolute attitude and the relative attitude control systems are developed directly in terms of the line-of-sight observations between spacecraft, without need for estimating the full absolute and relative attitudes, to improve accuracy and efficiency. Control systems are developed on the nonlinear configuration manifold, guaranteeing  exponential stability. Numerical examples are presented to illustrate the desirable properties of the proposed control system.
\end{abstract}

\section{Introduction}

Spacecraft formation flight has been intensively studied as distributing tasks over a group of low-cost spacecraft is more efficient and robust than operating a single large and powerful spacecraft~\cite{SchHad04}. For cooperative spacecraft missions, precise control of relative configurations among spacecraft is critical for success. For interferometer missions like Darwin, spacecraft in formation should maintain specific relative position and relative attitude configurations precisely. Precise relative position control and estimation have been addressed successfully, for example, by utilizing carrier-phase differential GPS~\cite{Mit04,GarChaPIAC05}.

For relative attitude control, there have been various approaches, including leader-follower strategy~\cite{KanYehIJRNC02,chou2011}, behavior-based controls~\cite{BeaLawITCST01,AbdTayITAC09} and virtual structures~\cite{RenBeaPAGNCC02, Ren2004}. These approaches have distinct features, but there is a common framework: the absolute attitude of each spacecraft in formation is determined independently and individually by using an on-board sensor, such as inertial measurement units and star trackers, and they are transmitted to other spacecraft to determine relative attitude between them. As the relative attitudes are determined \textit{indirectly} by comparing absolute attitudes, there is a fundamental limitation in accuracies. More explicitly, measurement and estimation errors of multiple sensors are accumulated in determination of the relative attitudes.

Vision-based sensors have been widely applied for navigation of autonomous vehicles, and recently, they are proposed for determination of relative attitudes. It is shown that the line-of-sight (LOS) measurements between two spacecraft determine the relative attitude between them completely, and based on it, an extended Kalman filter is developed~\cite{KimCraJGCD07,AndCraJGCD09}. Recently, these are also utilized in stabilization of relative attitude between two spacecraft~\cite{LeePACC12}, and tracking control of relative attitude formation between multiple spacecraft~\cite{WuACC13,Wu2012}, where control inputs are directly expressed in terms of line-of-sight measurements, without need for constructing the full, absolute attitude or the relative attitudes. 

However, these prior results are restrictive in the sense that the relative positions among spacecraft, and therefore the lines-of-sight with respect to the inertial frame, are assumed to be fixed during the whole attitude maneuvers. Therefore, they cannot be applied to the cases where both the relative positions and the relative attitudes should be controlled concurrently at the similar time scale. 

The objective of this paper is to eliminate such restrictions. In this paper, the translational dynamics and the rotational dynamics of each spacecraft are considered, and a serial network structure is defined. The first spacecraft at the network, namely the leader controls its absolute position and absolute attitude with respect to an inertial frame, and the remaining spacecraft, namely followers control its relative position and relative attitude with respect to another spacecraft ahead in the serial chain of network. The main contribution is that both the absolute attitude controller of the leader, and the relative attitude controller of the followers are defined directly in terms of the line-of-sight measurements, and exponential stability is guaranteed without the restrictive assumption that the relative positions are fixed.

Therefore, the control system proposed in this paper inherits the desirable features of the relative attitude controls based on the lines-of-sight~\cite{LeePACC12,WuACC13,Wu2012}, namely low cost and long-term stability, while requiring no corrections in measurements as opposed to gyros, or eliminating needs for computationally expensive star tracking algorithms. But, it can be applied to more realistic cases where the relative positions are controlled simultaneously. Another distinct feature of this paper is that the absolute attitude of the leader is also controlled, whereas the preliminary works are only focused on relative attitude formation control~\cite{LeePACC12}. All of these are constructed on the special orthogonal group to avoid singularities, complexities associated with local parameterizations or ambiguities of quaternions in representing attitudes.

\section{Problem Formulation}\label{sec:PF}

Consider $n$ spacecraft, where each spacecraft is modeled as a rigid body. Define an inertial frame, and the body-fixed frame for each spacecraft. The configuration of the $i$-th spacecraft is defined by $(R_i,x_i)\in\SE$, where the special Euclidean group $\SE$ is the semi-direct product of the special orthogonal group $\SO=\{R\in\Re^{3\times 3}\,|\, R^TR=I,\; \mathrm{det}[R]=1\}$, and $\Re^3$. The rotation matrix $R_i\in\SO$ represents the linear transformation of the representation of a vector from the $i$-th body fixed frame to the inertial frame, and the vector $x_i\in\Re^3$ denote the location of the mass center of the $i$-th spacecraft with respect to the inertial frame. 

\subsection{Dynamic Model}

Let $m_i\in\Re$ and $J_i\in\Re^{3\times 3}$ be the mass and the inertia matrix of the $i$-th spacecraft. The equations of motion are given by
\begin{gather}
m_i\ddot{x}_i=f_i,\label{eq:f} \\
\dot{x}_i=v_i,\\
J_i\dot\W_i + \W_i\times J_i\W_\i = u_i,\label{eq:Wdot}\\
\dot{R}_i = R_i\hat\W_i,\label{eq:Rdot}
\end{gather}
where $v_i,\Omega_i\in\Re^3$ are the translational velocity and the rotational angular velocity of the $i$-th spacecraft, respectively. The control force acting on the $i$-th spacecraft is denoted by $f_i\in\Re^3$, and the $i$-th control moment is denoted by $M_i\in\Re^3$. The vectors $v_i,f_i$ are represented with respect to the inertial frame, and $\Omega_i,M_i$ are represented with respect to the $i$-th body-fixed frame. 

The \textit{hat} map $\wedge :\Re^{3}\rightarrow\so$ transforms a vector in $\Re^3$ to a $3\times 3$ skew-symmetric matrix such that $\hat x y = (x)^\wedge y= x\times y$ for any $x,y\in\Re^3$. 
The inverse of the hat map is denoted by the \textit{vee} map $\vee:\so\rightarrow\Re^3$.
A few properties of the hat map are summarized as follows:
	\begin{gather}
    \widehat{x\times y} = \hat x \hat y -\hat y \hat x = yx^T-xy^T,\label{eq:hatxy}\\
    \tr[{\hat x A}]=
	\frac{1}{2}\tr[{\hat x (A-A^T)}]=-x^T (A-A^T)^\vee,\label{eq:hatxA}\\
	\hat{x}A+A^T\hat{x}=(\braces{\tr[A]I_{3\times3}-A}x)^\wedge, \label{eq:xAAx}\\
	R\hat x R^T = (Rx)^\wedge,\label{eq:RxR} \\
	\exp(\widehat{Ry})= R\exp(\hat{y})R^\T, \label{eq:exp}
	\end{gather}
for any $x,y\in\Re^3$, $A\in\Re^{3\times 3}$, and $R\in\SO$. 

Suppose that $n$ spacecraft are serially connected by daisy-chaining. For notational convenience, it is assumed that spacecraft indices are ordered along the serial network. The first spacecraft of the serial chain, namely Spacecraft 1 is selected to be the \textit{leader}, and its absolute position and its absolute attitude are controlled with respect to the inertial frame. The remaining spacecraft are considered as \textit{followers}, where each follower controls its relative position and its relative attitude with respect to the spacecraft that is one step ahead in the serial chain. For example, Spacecraft 2 controls its relative configuration with respect to Spacecraft 1.

It can be shown that the controller structure of each of followers are identical. To make the subsequent derivations more concrete and concise, the control systems are defined for the leader, Spacecraft 1, and the first follower, Spacecraft 2, only. Later, it is  generalized for control systems of the remaining spacecraft.

Each spacecraft is assumed to measure the line-of-sight toward two assigned objects via vision-based sensors to control its attitude. Line-of-sight measurements for the leader and the followers are described as follows.

\setlength{\unitlength}{0.1\columnwidth}
\begin{figure}
\scriptsize\selectfont
\centerline{
	\begin{picture}(10,5.8)(-0.2,0)
	\put(0.7,0){\includegraphics[width=0.75\columnwidth]{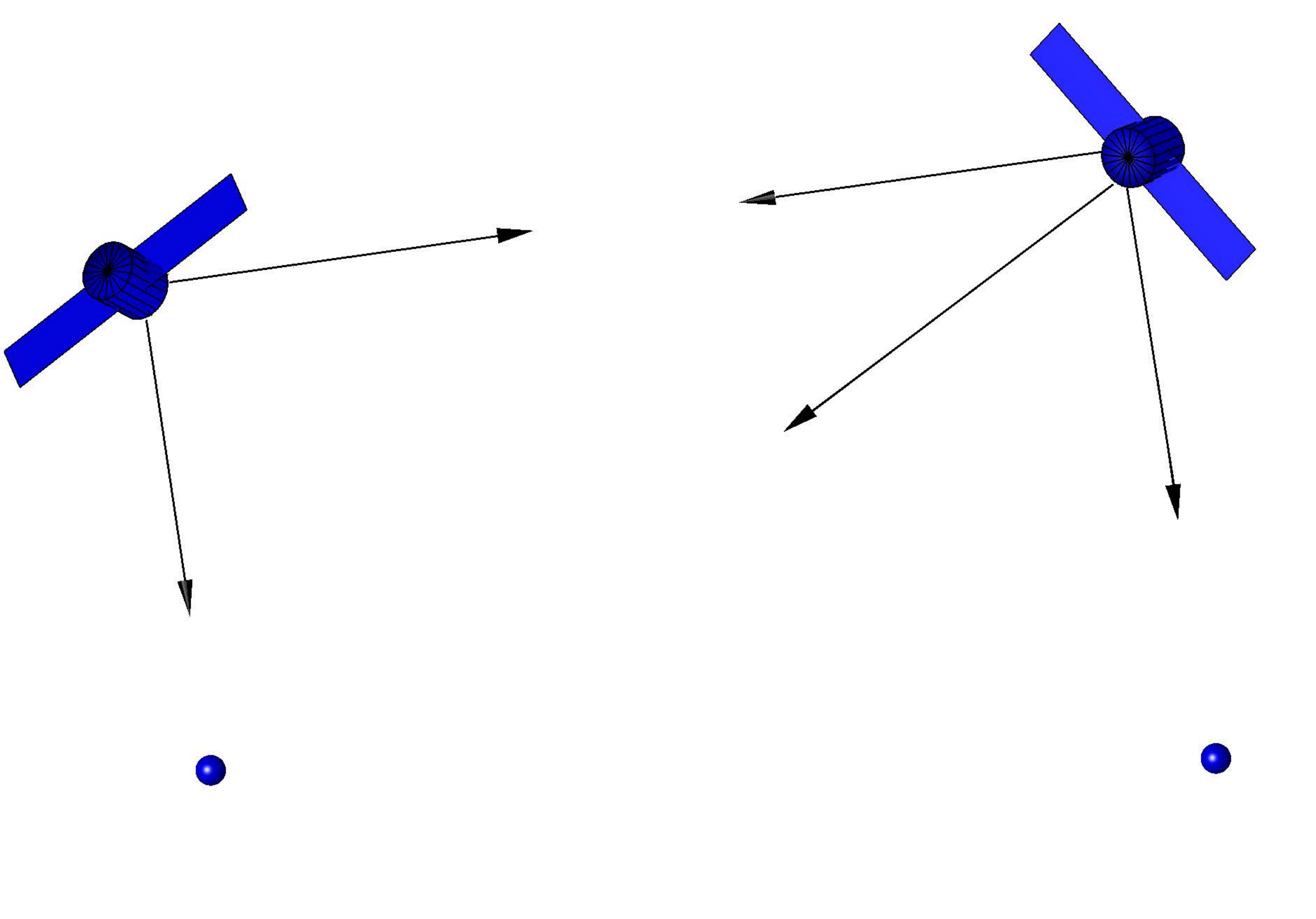}}
	\put(1.0,0.2){Object A (Spacecraft 3)}
	\put(7.1,0.3){Object B}
	\put(7.4,4.7){Spacecraft 1}
	\put(-0.2,4.0){Spacecraft 2}
	\put(1.9,2.3){$s_{23}$}
	\put(2.9,4.1){$s_{21}$}
	\put(5.2,4.4){$s_{12}$}
	\put(7.6,2.6){$s_{B}$}
	\put(4.5,2.5){$s_{A}\,(s_{13})$}
	\end{picture}}
\vspace*{-0.1cm}
\caption{Line-of-sight (LOS) Measurements: The leader, Spacecraft 1, measures the LOS toward to distinct objects $A,B$ to control its absolute attitude (the object $A$ is selected as Spacecraft 3). To control the relative attitude between Spacecraft 1 and Spacecraft 2, they measure the LOS toward each other, and also toward the common object selected as Spacecraft 3.} 
\label{fig:FS}
\end{figure}

\subsection{Line-of-Sight Measurements of Leader}

Suppose that there are two distinct objects, namely $A,B$, such as distant stars, whose locations in the inertial reference frame are available. Let $s_A,s_B\in\Sph^2=\{q\in\Re^3\,|\,\|q\|=1\}$ be the unit-vectors representing the directions from Spacecraft 1 to the object A and to the object B, respectively. The vectors $s_A$ and $s_B$ are expressed with respect to the inertial frame, and we have $s_A\times s_B\neq 0$, as they are distinct objects. The time-derivatives of these unit-vectors are given by 
	\begin{align}
	\dot{s}_A=\mu_A\t s_A, \quad \dot{s}_B=\mu_B\t s_B, \label{eq:mu}
	\end{align}
where $\mu_A,\mu_B\in\Re^3$ are angular velocities of $s_A, s_B$, respectively, that can be determined explicitly by the relative translational motion of the leader with respect to the objects $A, B$. 

Assume the leader is equipped with a vision-based sensor that can measure the directions toward the objects $A$ and $B$. These two line-of-sight measurements are expressed with respect to the first body-fixed frame, and they are defined as $b_A, b_B\in\Sph^2$. Note that $s_A$, $s_B$ and $b_A$, $b_B$ are related by the rotation matrix, i.e., 
	\begin{gather} 
	s_j=R_1b_j \text{ or equivalently, } b_j=R_1^{\T}s_j, \label{eq:sRb}
	\end{gather}
for $j\in\{A,B\}$.  From (\ref{eq:mu}) and (\ref{eq:sRb}), we can obtain the kinematic equations for $b_A$, $b_B$ as
	\begin{align} 
	\dot{b}_j
	&= \dot{R}_1^{\T}s_j+R_1^{\T}\dot{s}_j =-\hat{\W}_1R_1^{\T}s_j  +R_1^\T\hat{\mu}_j s_j \no\\
	&= -\hat{\W}_1b_j +R_1^\T\hat{\mu}_j R_1R_1^\T s_j =b_j\t(\W_1-R_1^\T\mu_j), \label{eq:bi-dot}
	\end{align}
for $j\in\{A,B\}$. 

\subsection{Line-of-Sight Measurements of Follower}

Spacecraft 2 controls its relative attitude and its relative position with respect to Spacecraft 1. The relative configurations are defined as follows. The relative attitude of Spacecraft 2 with respect to Spacecraft 1 is represented by a rotation matrix $Q_{21}\in\SO$, given b
	\begin{align} 
	Q_{21}=R_1^\T R_2.
	\end{align}
From \refeqn{Rdot}, the time-derivative of $Q_{21}$ is given by
	\begin{align}
	\dot{Q}_{21} 
	&= -\hat\W_1 R_1^\T R_2 + R_1^\T R_2\hat\W_2 = Q_{21}\hat\W_2 - \hat\W_2 Q_{21}\no\\
	&= Q_{21} (\W_2 - Q_{21}^\T \W_1)^\wedge \triangleq Q_{21}\hat\W_{21},\label{eqn:dotQij}
	\end{align}
where the relative angular velocity vector of Spacecraft 2 with respect to Spacecraft 1 is defined as $\W_{21}=\W_2 -Q_{21}^\T\W_1\in\Re^3$. The relative position of Spacecraft 2 with respect to Spacecraft 1 is given by
\begin{align} 
	x_{21}=x_2-x_1.
	\end{align}


To control the relative attitude between the pair of Spacecraft 1 and Spacecraft 2, a third common object, namely Spacecraft 3 is assigned to form a triangular structure. It is assumed that Spacecraft 1 and Spacecraft 2 measure the line-of-sight toward each tother, and also toward Spacecraft 3. Furthermore, Spacecraft 3 does not lie on the line joining Spacecraft 1 and 2. 

Let $s_{ij}\in\Sph^2$ be the the unit-vector from the $i$-th spacecraft toward the $j$-th spacecraft, i.e., $s_{ij}=\frac{x_j-x_i}{\|x_j-x_i\|}$, and let $b_{ij}\in\Sph^2$ be the line-of-sight measurement from the $i$-th spacecraft toward the $j$-th spacecraft, represented with respect to the $i$-th body-fixed frame, i.e., $s_{ij}=R_ib_{ij}$. According to the above assumptions, we have $s_{13}\times s_{23}\neq 0$, and there are four measurements $\{b_{12},\, b_{13},\, b_{21},\, b_{23}\}$ for the pair of Spacecraft 1 and 2. 

It has been shown that the relative attitude $Q_{21}$ can be completely determined from the following constraints~\cite{LeePACC12}:
	\begin{gather}
	b_{12} = - Q_{21} b_{21}, \label{eq:consta}\\
	b_{123}= -Q_{21} b_{213},\label{eq:constb}
	\end{gather}	
where $b_{123}=\frac{b_{12}\t b_{13}}{\|b_{12}\t b_{13}\|}\in\Sph^2$	and $b_{213}=\frac{b_{21}\t b_{23}}{\|b_{21}\t b_{23}\|}\in\Sph^2$. The first constraint \refeqn{consta} states that the
unit vector from Spacecraft 1 to Spacecraft 2 is opposite to the unit vector from Spacecraft 2 to Spacecraft 1, i.e., $s_{12} = -s_{21}$. The second constraint \refeqn{constb} implies that the plane spanned by $s_{12}$ and  $s_{13}$ should be co-planar
with the plane spanned by  $s_{21}$ and  $s_{23}$.  For given LOS measurements $\{b_{12},b_{13},b_{21},b_{23}\}$, the relative attitude $Q_{21}$ is uniquely determined by solving \refeqn{consta} and \refeqn{constb} for $Q_{21}$.

Similar to the definitions in \refeqn{sRb}-\refeqn{bi-dot}, we have 
	\begin{gather} 
	s_{ij}(t)=R_i(t)b_{ij}(t),\quad b_{ij}(t)=R_i(t)^{\T}s_{ij}(t), \label{eq:rel_sRb} \\
	\dot{s}_{ij} =\mu_{ij}\t s_{ij}, \label{eq:rel_ds}\\
	\dot{b}_{ij} =b_{ij}\t(\W_i-R_i^\T\mu_{ij}), \label{eq:rel_db}
	\end{gather}
for $(i,j)\in\{(1,2), (1,3), (2,1), (2,3) \}$. Further, for $(i,j,k)\in\{(1,2,3), (2,1,3)\}$, we have  
	\begin{align}
	\dot{s}_{ijk}\triangleq\mu_{ijk}\t s_{ijk},
	\end{align}
where $s_{ijk}=\frac{s_{ij}\t s_{ik}}{\|s_{ij}\t s_{ik}\|}=R_ib_{ijk}\in\Sph^2$ and $\mu_{ijk}\in\Re^3$ is the angular velocity of $s_{ijk}$. Similar to \refeqn{rel_db}, we can write  
	\begin{align}
	\dot b_{ijk} &= \dot{R}_i^\T s_{ijk} +R_i^\T\dot{s}_{ijk} = b_{ij}\t(\W_i-R_i^\T\mu_{ijk}) 
	.\label{eqn:dotbijk}
	\end{align}
Notice that $\|b_{ij}\times b_{ik}\| = \| R_i^\T s_{ij} \times R_i^T s_{ik}\|=\|s_{ij}\times s_{ik}\|$, and it is non-zero. 


\subsection{Spacecraft Formation Control Problem}

Next, we define formation control problem. For the leader, Spacecraft 1, the desired \textit{absolute} attitude trajectory $R_1^d(t)\in\SO$ is given, and it satisfies the following kinematic equation
	\begin{gather} 
	\dot{R}_1^d=R_1^d\hat{\W}_1^d, \label{eq:Tra1} 
	\end{gather}
where $\W_1^d\in\Re^3$ is the desired angular velocity of Spacecraft 1. We transform this into the desired line-of-sight and its angular velocity as follows. The corresponding desired line-of-sight measurements are given by 
\begin{align} 
b_i^d=(R_1^d)^{\T}s_i,\quad i\in\{A,B\}. \label{eq:Tra2}
\end{align}
Similar with (\ref{eq:bi-dot}), the kinematic equations for desired line-of-sight measurements can be obtained as
\begin{align} 
	\dot{b}_i^d=b_i^d\t\big(\W_1^d-R_1^d\mu_i\big),\quad i\in\{A,B\}.
\end{align}
The desired position and the desired velocity of Spacecraft 1 are given by $x_1^d(t),v_1^d(t)\in\Re^3$.

For the follower, Spacecraft, the desired \textit{relative} attitude trajectory is defined as $Q_{21}^d(t)\in\SO$, and it satisfies the following kinematics equation,
\begin{align}
	\dot{Q}_{21}^d =Q_{21}^d\hat{\W}_{21}^d,
\end{align}
where $\hat{\W}_{21}^d\in\Re^3$ is the desired relative angular velocity vector, satisfying
	\begin{align}
	\W_{21}^d = \W_2^d - {Q_{21}^d}^\T\W_1.\label{eq:Wijd}
	\end{align}	
Let the desired relative position and the relative velocity of Spacecraft 2 be $x_{21}^d(t),v_{21}^d(t)\in\Re^3$, and they satisfy
\begin{align}
	\dot{x}_{21}^d &=v_{21}^d =v_2^d-v_1.
	\end{align}	

\begin{assump}
The magnitude of each desired angular velocity is bounded by
	\begin{align}
	\|\W^d\|\leq \Bd,  \label{eq:Bd}
	\end{align}
where $\W^d\in\{\W_{21}^d, \W_2^d, \W_1^d\}$.
\end{assump}
\begin{assump}
The magnitude of angular velocity of each line-of-sight measurement is bounded by 
	\begin{align}
	\|\mu\|\leq B_\mu,  \label{eqn:mu}
	\end{align}
where $B_\mu\in\Re^+$ is a positive constant. The $\mu$ here includes $\{\mu_A, \mu_B, \mu_{12}, \mu_{21},\mu_{123}, \mu_{213}\}$.	
\end{assump}
The first assumption is natural in any tracking problem, and the second assumption is satisfied if there is no collision between spacecraft and the velocities of each spacecraft are bounded.

The goal is to design control inputs ($u_1,\,u_2,\,f_1,\,f_2$) such that the zero equilibrium of the position and attitude tracking errors becomes asymptotically stable, and in particular, it is required the the control moments $u_1,u_2$ are directly expressed in terms of the line-of-sight measurements.


\section{Attitude and Position Tracking Error Variables}\label{sec:AbAtt}

As a preliminary work before designing control systems, in this section, we present error variables for each of absolute attitude tracking error, relative attitude tracking error, and position tracking error. 

\subsection{\tcb{Absolute Attitude Error Variables}}

The fundamental idea of constructing attitude control system in terms of LOS measurements is utilizing the property that the attitude error becomes zero if the LOS are aligned to their desired values, i.e., we have $R_1=R_1^d$ if $b_A=b_A^d$ and $b_B=b_B^d$~\cite{Wu2012}. This motivates the following definition of the configuration error function for each line-of-sight:
		\begin{gather} 
		\Psi_j=1-b_j\cdot b_j^d =1-{R_1^\T}s_j\cdot{R_1^d{^\T}}s_j,
		\end{gather}
for each object $j\in\{A,B\}$.	They are combined into	
\begin{gather}
\Psi_1=k_{b_A}\Psi_A +k_{b_B}\Psi_B, \label{eq:error2}
\end{gather}
where $k_{b_A}\neq k_{b_B}>$ are positive constants. The error function $\Psi_i$ refers to the corresponding error of LOS measurements, and $\Psi_1$ represents the combined errors for Spacecraft 1.

By finding the derivatives of the configuration error functions, we obtain the configuration error vectors as folllows.
		\begin{gather} 
		e_{b_A}=b_A\t b_A^d,\quad e_{b_B}=b_B\t b_B^d, \label{eq:error3}\\
		e_b= k_{b_A}e_{b_A}+k_{b_B}e_{b_B}. \label{eq:error4}
		\end{gather}
Additionally, the angular velocity error vector is defined as 
	\begin{align}
	e_{\W_1}=\W_1-\W_1^d. \label{eq:error5}
	\end{align}

It can be shown that the above error variables satisfy the following properties.

\begin{prop}\label{prop:Att_Err} 

\renewcommand{\labelenumi}{(\roman{enumi})}
\begin{enumerate}\renewcommand{\itemsep}{3pt}
\item The error function  $\Psi_1$ and the error vector $e_b$ can be rewritten as
	\begin{gather}
	\Psi_1=\tr[K_1(\I-R_1R_1^d{}^\T)], \label{eq:Psi}\\
 	e_b=[R_1^d{^\T}K_1R_1-R_1^{\T}K_1R_1^d]^\vee, \label{eq:eb}		
	\end{gather}
where $K_1=k_{b_A}s_As_A^{\T}+k_{b_B}s_Bs_B^{\T}\in\Re^{3\t3}$ is symmetric and $\I$ is the 3 by 3 identity matrix.
\item $\Psi_1$ is locally quadratic, more expliclty,
	\begin{gather}
	\underline\psi_{b}\|e_{b}\|^2\leq \Psi_{1} \leq \overline\psi_{b} \|e_{b}\|^2, \label{eq:psi1}
	\end{gather}		
where $\underline\psi_{b}=\frac{h_1}{h_2+h_3}$ and $\overline\psi_{b}=\frac{h_1h_4}{h_5(h_1-\psi_1)}$ for		
	\begin{align*} 
	h_1&= 2\min\{k_{b_A},k_{b_B}\}, \\
	h_2&= 4\max\{(k_{b_A}-k_{b_B})^2, k_{b_A}^2, k_{b_B}^2\}, \\
	h_3&= 4(k_{b_A}+k_{b_B})^2, \\
	h_4&= 2(k_{b_A}+k_{b_B}), \\
	h_5&= 4\min\{k_{b_A}^2,k_{b_B}\},
	\end{align*}	
and $\psi_1$ is a positive constant satisfying $\Psi_1<\psi_1<h_1$. 
\item $\|e_b\| \leq k_{b_A}+k_{b_B}$.
\item $\frac{d}{dt}\Psi_1=e_{b}\cdot e_{\W_1} +\Gamma\|e_b\|$.
\item $\|\frac{d}{dt}e_b\|\leq\frac{1}{\sqrt{2}}(k_{b_A}+k_{b_B})\|e_{\W_1}\|+(B_{\W_d}+B_1)\|e_b\|$.	 
\end{enumerate}
\end{prop}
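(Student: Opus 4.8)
The plan is to treat the five claims in increasing order of difficulty, beginning with the purely algebraic identities in (i) and (iii), then the quadratic bound in (ii), and finally the two kinematic estimates in (iv)--(v).

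For (i) I would substitute $b_j=R_1^\T s_j$ and $b_j^d=R_1^d{}^\T s_j$ into the definitions. Writing $\Psi_j = 1 - s_j^\T R_1 R_1^d{}^\T s_j$ and using $\tr[s_j s_j^\T M]=s_j^\T M s_j$ together with $\|s_j\|=1$ turns the weighted sum $\Psi_1=k_{b_A}\Psi_A+k_{b_B}\Psi_B$ into $\tr[K_1(\I - R_1 R_1^d{}^\T)]$, giving \refeqn{Psi}. For \refeqn{eb} I would apply the hat-map identity \refeqn{hatxy}, namely $\widehat{e_{b_j}}=\widehat{b_j\t b_j^d}=b_j^d b_j^\T - b_j b_j^d{}^\T$; substituting the rotation relations yields $\widehat{e_{b_j}}=R_1^d{}^\T s_j s_j^\T R_1 - R_1^\T s_j s_j^\T R_1^d$, and summing with the weights $k_{b_j}$ reconstructs $K_1$ inside, so that $\widehat{e_b}=R_1^d{}^\T K_1 R_1 - R_1^\T K_1 R_1^d$; the vee map gives the claim. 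Claim (iii) is then immediate from $\|b_j\t b_j^d\|\le\|b_j\|\|b_j^d\|=1$ and the triangle inequality applied to \refeqn{error4}.

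For (ii), the representation \refeqn{Psi} shows $\Psi_1$ is the standard trace error function $\tr[K_1(\I-\tilde R)]$ evaluated at the relative rotation $\tilde R=R_1 R_1^d{}^\T\in\SO$. I would therefore follow the usual Morse-theoretic argument: diagonalize the symmetric $K_1$, expand $\tilde R$ in its eigenbasis, and bound $\Psi_1$ from above and below by multiples of $\|e_b\|^2$ using the eigenvalue spread of $K_1$ and the sublevel-set restriction $\Psi_1<\psi_1<h_1$. The main obstacle is that $K_1=k_{b_A}s_As_A^\T+k_{b_B}s_Bs_B^\T$ has rank at most two, so $K_1$ is only positive semidefinite; I would have to exploit $s_A\t s_B\neq 0$ to argue that the kernel direction is neither detected by $\Psi_1$ nor enters $e_b$, and then track how the explicit constants $h_1,\dots,h_5$ arise from the $\min/\max$ of the weights and of their pairwise combinations. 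This bookkeeping of explicit constants is the most delicate part of the whole proposition.

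For (iv) and (v) I would differentiate along the kinematics. Using \refeqn{bi-dot}, $\dot b_j=b_j\t(\W_1-R_1^\T\mu_j)$, and the analogous desired kinematics, a scalar-triple-product manipulation of $\frac{d}{dt}\Psi_j=-\dot b_j\cdot b_j^d-b_j\cdot\dot b_j^d$ gives $\frac{d}{dt}\Psi_j=e_{\W_1}\cdot e_{b_j}-[(R_1^\T-R_1^d{}^\T)\mu_j]\cdot e_{b_j}$; weighting and summing produces the principal term $e_b\cdot e_{\W_1}$ plus a residual driven by the motion of the lines-of-sight, which I would bound with Assumption~2 ($\|\mu\|\le B_\mu$) and absorb into $\Gamma\|e_b\|$, the restriction to a sublevel set keeping $\Gamma$ bounded. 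For (v) I would differentiate \refeqn{error4} term by term, $\frac{d}{dt}e_{b_j}=\dot b_j\t b_j^d+b_j\t\dot b_j^d$, expand the double cross products with the BAC--CAB identity, and separate the part linear in $e_{\W_1}$ from the part driven by $\W_1^d$ and $\mu_j$; after applying $\|b_j\|=1$ the former yields the coefficient $\tfrac{1}{\sqrt2}(k_{b_A}+k_{b_B})$, while the latter is bounded by $(\Bd+B_1)\|e_b\|$ with $B_1$ collecting the $B_\mu$-dependent constants, completing the estimate.
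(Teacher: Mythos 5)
Your treatment of (i) and (iii) is correct and coincides with the paper's, which simply cites its reference~\cite{Wu2012} for (i)--(ii) and uses the same unit-vector/triangle-inequality observation for (iii); your plan for (ii) is likewise the paper's own route, namely the eigendecomposition/Rodrigues argument packaged in its Lemma~\ref{lem:cite1}. One correction to your narrative for (ii): it is not true that the kernel direction of $K_1$ ``is neither detected by $\Psi_1$ nor enters $e_b$.'' For a rotation by angle $\th$ about $s_A\t s_B$ one computes $\Psi_1=(k_{b_A}+k_{b_B})(1-\cos\th)>0$, so such rotations are penalized. What rescues positive definiteness despite $\mathrm{rank}(K_1)=2$ is that the relevant constants are the \emph{pairwise sums} of the eigenvalues $\{k_{b_A},k_{b_B},0\}$, all of which are positive; this is exactly the $h_1$ appearing in the statement.

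Where you genuinely depart from the paper is in (iv)--(v). You differentiate the measurement-level quantities $\Psi_j=1-b_j\cdot b_j^d$ and $e_{b_j}=b_j\t b_j^d$ directly via the kinematics \refeqn{bi-dot}, so the motion of the lines of sight appears as explicit $\mu_j$-residuals to be bounded by Assumption~2. The paper instead differentiates the matrix forms \refeqn{Psi} and \refeqn{eb}; there the LOS motion shows up as $\dot K_1$-terms ($\tr[\dot K_1(\I-R_1R_1^d{}^\T)]$ in (iv), the term $\hat e_b^\xi$ in (v)), which are dominated by $\|e_b\|$ through Lemma~\ref{lem:cite2}. Your route is more elementary, and it sidesteps a weak point of the paper's argument (the decomposition of the trace-free matrix $\dot K_1$ as $\tU_1\tG_1\tU_1^\T$ with all $\tg_i$ positive, which cannot hold). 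Two computational remarks: your per-term angular-velocity operator in (v) is $(b_j\cdot b_j^d)\I-b_jb_j^d{}^\T$, and these sum \emph{exactly} to the paper's $E_{\W_1}=\tr[R_1^\T K_1R_1^d]\I-R_1^\T K_1R_1^d$; estimating term by term and adding gives a coefficient of order $\sqrt{2}(k_{b_A}+k_{b_B})$, so to reach the stated $\tfrac{1}{\sqrt2}(k_{b_A}+k_{b_B})$ you must bound the summed operator as a whole, as the paper does with a Frobenius-norm computation. Also, the $\W_1^d$-driven terms combine cleanly to $e_b\t\W_1^d$, giving the $\Bd\|e_b\|$ contribution directly in terms of the combined error.

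The one genuine gap is the absorption step in (iv) (and in the $\mu_j$-part of (v)). Your residual is $-\sum_j k_{b_j}[(R_1^\T-R_1^d{}^\T)\mu_j]\cdot e_{b_j}$, which Assumption~2 bounds by $2B_\mu\sum_j k_{b_j}\|e_{b_j}\|$; to reach $\Gamma\|e_b\|$ you must dominate the \emph{individual} errors $\|e_{b_j}\|$ by the \emph{combined} error $\|e_b\|$. This is false globally: at the spurious critical points of $\Psi_1$ the weighted sum cancels, so $e_b=0$ while $e_{b_A},e_{b_B}\neq0$, and no inequality $\|e_{b_j}\|\le C\|e_b\|$ can hold there. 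Saying that the sublevel-set restriction ``keeps $\Gamma$ bounded'' does not supply the mechanism; the fix is to route the bound through property (ii). Since $\|e_{b_j}\|^2=1-(b_j\cdot b_j^d)^2\le 2\Psi_j\le 2\Psi_1/k_{b_j}$, the upper bound $\Psi_1\le\overline\psi_b\|e_b\|^2$ (valid precisely on the sublevel set $\Psi_1<\psi_1<h_1$) yields
\begin{equation*}
\|e_{b_j}\|\le\sqrt{2\overline\psi_b/k_{b_j}}\;\|e_b\|,\qquad j\in\{A,B\},
\end{equation*}
after which your residual is indeed bounded by $\Gamma\|e_b\|$ with an explicit $\Gamma$ proportional to $B_\mu$. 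Without this step the residual is only $O(1)$ rather than $O(\|e_b\|)$; it would then fail to vanish at the equilibrium, and the exponential-stability argument of Proposition~\ref{prop:controller}, which feeds these estimates into the quadratic forms $M_1$, would collapse.
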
   

	\begin{proof}
	See Appedix, Section \ref{subsec:pAtt} .
	\end{proof}

\subsection{\tcb{Relative Attitude Error Variables}}

Similarly, we define error variables for the relative attitude. It is also based on the fact that we have $Q_{21}=Q_{21}^d$ provided that $b_12=-Q_{21}^d b_{21}$ and $b_{123}=-Q_{21}^db_{213}$~\cite{WuACC13}. Configuration error functions that represent the errors in satisfaction of \refeqn{consta} and \refeqn{constb} are defined as
		\begin{gather}
		\Psi_{21}^\al = \frac{1}{2}\|b_{12}+Q_{21}^d b_{21}\|^2 = 1 +  b_{12}\cdot Q_{21}^d b_{21},\no\\
		\Psi_{21}^\be = 1 +b_{123}\cdot Q_{21}^d b_{213}.
		\end{gather}
For positive constants $k_{21}^\alpha\neq k_{21}^\beta$, these are combined into		
		\begin{gather}
		\Psi_{21} =k_{21}^\al\Psi_{21}^\al +k_{21}^\be\Psi_{21}^\be. \label{eq:Psi21}
		\end{gather}
These yield the configuration error vectors  as
		\begin{gather}
		e_{21}^\al =({Q_{21}^d}^\T b_{12})\t b_{21},\quad
		e_{21}^\be =({Q_{21}^d}^\T b_{123})\t b_{213}, \no\\
		e_{21} =k_{21}^\al e_{21}^\al +k_{21}^\be e_{21}^\be. \label{eq:e21}
		\end{gather}
	We have $\|e^\alpha_{21}\|,\|e^\beta_{21}\|\leq 1$ as $b_{ij},b_{ijk}$ are unit vectors. And the angular velocity error vector is defined as:
	\begin{align}
	e_{\Omega_2} = \Omega_2 -\Omega_2^d,\label{eq:eW12}
	\end{align}
where $\Omega_2^d$ can be determined by \refeqn{Wijd}. The properties of  relative error variables are summarized as follows:
	
\begin{prop}\label{prop:Rel_Err}	
\renewcommand{\labelenumi}{(\roman{enumi})}
\begin{enumerate}\renewcommand{\itemsep}{3pt}	
\item The error function $\Psi_{21}$ and the error vector $e_{21}$ can be rewritten as
		\begin{align*}
		\Psi_{21} &= \tr[K_{21}(\I-R_1Q_{21}^dR_2^\T)], \\
		e_{21} &=[{Q_{21}^d}^\T{R_1^\T}K_{21}R_2 -R_2^\T K_{21}{R_1}{Q_{21}^d}]^\vee,
		\end{align*}
	  where $K_{21}=k_{21}^\al s_{21}s_{21}^\T +k_{21}^\be s_{213}s_{213}^\T\in\Re^{3\t3}$.
\item $\Psi_{21}$ is locally quadratic, more expliclty,
	\begin{align} 
	\underline{\psi}_{21}\|e_{21}\|^2 \leq\Psi_{21}\leq \overline{\psi}_{21}\|e_{21}\|^2, \label{eq:psi21}
	\end{align}
where $\underline{\psi}_{21}=\frac{n_1}{n_2+n_3}$ and $\overline{\psi}_{21}=\frac{n_1n_4}{n_5(n_1-\phi_{21})}$, for
	\begin{align*} 
	n_1&= 2\min\{k_{21}^\al, k_{21}^\be\}, \\
	n_2&= 4\max\{(k_{21}^\al- k_{21}^\be)^2, (k_{21}^\al)^2, (k_{21}^\be)^2\}, \\
	n_3&= 4(k_{21}^\al+k_{21}^\be)^2, \\
	n_4&= 2(k_{21}^\al+k_{21}^\be), \\
	n_5&= 4\min\{{k_{21}^\al}^2, k_{21}^\be\},
	\end{align*}
	 and $\phi_{21}$ is a positive constant satisfying $\Psi_{21}<\phi_{21}<n_1$. 	  	
\item $\frac{d}{dt}\Psi_{21}=e_{21}\cdot e_{\W_2} +\Gamma_{21}\|e_{21}\|^2$.
\item $\|\frac{d}{dt}e_{21}\|=\frac{1}{\sqrt{2}}(k_{21}^\al+k_{21}^\be)\|e_{\W_2}\| +(\Bd+B_{21})\|e_{21}\|$.
\end{enumerate}	
\end{prop}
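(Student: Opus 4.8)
The plan is to mirror the structure of \refprop{Att_Err}, exploiting that $\Psi_{21}=\tr[K_{21}(\I-E)]$ with $E=R_1 Q_{21}^d R_2^\T\in\SO$ has the same form as the absolute error function, with $E$ playing the role of the attitude error matrix $R_1 R_1^d{}^\T$. For part (i) I would substitute the measurement relations $b_{12}=R_1^\T s_{12}$, $b_{21}=R_2^\T s_{21}$, $b_{123}=R_1^\T s_{123}$, and $b_{213}=R_2^\T s_{213}$ into the definitions of $\Psi_{21}^\al$ and $\Psi_{21}^\be$. Because the constraints \refeqn{consta}--\refeqn{constb} hold for the true attitude $Q_{21}=R_1^\T R_2$, the same relations give $s_{12}=-s_{21}$ and $s_{123}=-s_{213}$, so each inner product reduces to $-s^\T E s$ for $s\in\{s_{21},s_{213}\}$. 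Using $1=\|s\|^2=\tr[ss^\T\I]$, each term becomes $\tr[ss^\T(\I-E)]$, and the weighted sum yields $\Psi_{21}=\tr[K_{21}(\I-E)]$. For the error vector I would write $e_{21}^\al=-({Q_{21}^d}^\T R_1^\T s_{21})\t(R_2^\T s_{21})$ and apply \refeqn{hatxy} in the form $\widehat{p\t q}=qp^\T-pq^\T$ to rewrite the cross product as $[{Q_{21}^d}^\T R_1^\T s_{21}s_{21}^\T R_2-R_2^\T s_{21}s_{21}^\T R_1 Q_{21}^d]^\vee$; the analogous computation for the $\be$ term and summation gives the stated expression, which is well posed because $K_{21}$ is symmetric.

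For part (ii) I would reduce the bound to the same scalar estimates already used for $\Psi_1$. The essential observation is that $K_{21}$ is symmetric with orthogonal eigenvectors $s_{21}$ and $s_{213}$---orthogonal because $s_{213}$ is normal to the plane containing $s_{21}$---so its eigenvalues are $k_{21}^\al$, $k_{21}^\be$, and $0$. With this spectrum fixed, the chain of inequalities relating $\tr[K_{21}(\I-E)]$ to $\|[K_{21}E-E^\T K_{21}]^\vee\|^2$ that establishes \refeqn{psi1} transfers directly, producing \refeqn{psi21} with $n_1,\dots,n_5$ in place of $h_1,\dots,h_5$.

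For parts (iii) and (iv) I would differentiate directly, using the kinematics \refeqn{rel_db}, (\ref{eqn:dotbijk}), and $\dot Q_{21}^d=Q_{21}^d\hat\W_{21}^d$. Differentiating $\Psi_{21}$ term by term, the contributions carrying $\W_2$ collect into the pairing $e_{21}\cdot\W_2$, and adding and subtracting $\W_2^d$ isolates $e_{21}\cdot e_{\W_2}$; the residual terms generated by the line-of-sight rates $\mu_{ij},\mu_{ijk}$ and the desired relative motion are gathered into $\Gamma_{21}\|e_{21}\|^2$. For (iv), differentiating $e_{21}=k_{21}^\al e_{21}^\al+k_{21}^\be e_{21}^\be$, using $\|e_{21}^\al\|,\|e_{21}^\be\|\le1$, and invoking the bounds \refeqn{Bd} and (\ref{eqn:mu}) yields the $\tfrac{1}{\sqrt2}(k_{21}^\al+k_{21}^\be)\|e_{\W_2}\|$ term from the angular-velocity error and the $(\Bd+B_{21})\|e_{21}\|$ term from the bounded desired and line-of-sight rates.

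I expect the principal obstacle to lie in parts (iii)--(iv): unlike the leader, the follower error couples two rotating bodies together with the desired relative trajectory, so the differentiation produces cross terms in $\W_1$, $\W_2$, $\W_{21}^d$ and four distinct $\mu$'s, and the delicate bookkeeping is to verify that every term not proportional to $e_{\W_2}$ can be absorbed into a constant multiple of $\|e_{21}\|^2$ for (iii) or $\|e_{21}\|$ for (iv), which is precisely what pins down $\Gamma_{21}$ and $B_{21}$.
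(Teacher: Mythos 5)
Your treatment of parts (i) and (ii) matches the paper's: substitute the measurement relations, use $s_{12}=-s_{21}$ and $s_{123}=-s_{213}$, exploit that $K_{21}$ diagonalizes as $U_{21}G_{21}U_{21}^\T$ with $G_{21}=\diag[k_{21}^\al,\,k_{21}^\be,\,0]$ (valid since $s_{213}\perp s_{21}$), and invoke the quadratic-bound lemma. The only step you elide there is showing that the lemma's error vector $e_P$ for the conjugated rotation $P=U_{21}^\T R_1Q_{21}^dR_2^\T U_{21}$ satisfies $\|e_P\|=\|e_{21}\|$, which the paper gets from \refeqn{RxR}; that is routine.

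The genuine gap is in (iii)--(iv), and it is exactly the point you defer as ``delicate bookkeeping.'' Differentiating term by term in the $b$-variables and then ``gathering'' everything not proportional to $e_{\W_2}$ into $\Gamma_{21}\|e_{21}\|^2$ (resp.\ $(\Bd+B_{21})\|e_{21}\|$) is not an estimate --- it assumes the conclusion. A direct bound of the $\mu$-dependent terms using only \refeqn{eqn:mu} gives quantities of size $O(B_\mu)$, i.e.\ constants, not quantities proportional to $\|e_{21}\|^2$ or even to $\|e_{21}\|$; the needed proportionality reflects the structural fact that these terms vanish when the relative attitude error is zero, and that must be exhibited, not asserted. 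The paper's mechanism is: differentiate the matrix form $\Psi_{21}=\tr[K_{21}(\I-Q_{21}^e)]$ with $Q_{21}^e=R_1Q_{21}^dR_2^\T$, so that all line-of-sight rates are isolated in $\dot K_{21}$; observe that $\dot K_{21}$ is symmetric (since $\frac{d}{dt}(ss^\T)=\hat\mu ss^\T-ss^\T\hat\mu$), hence $\dot K_{21}=\tU_{21}\tG_{21}\tU_{21}^\T$; recognize $\tr[\dot K_{21}(\I-Q_{21}^e)]$ as another configuration-error function with a different weight matrix but the \emph{same} attitude error, which Lemma \ref{lem:cite2}(i) bounds by a constant times $\|e_{21}\|^2$ --- this is what defines $\Gamma_{21}$. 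Likewise in (iv), after splitting $\dot{\hat e}_{21}$ into kinematic terms and $\dot K_{21}$-terms, the latter block $\mathfrak{\hat{e}_b}={Q_{21}^d}^\T R_1^\T\dot K_{21}R_2-R_2^\T\dot K_{21}R_1Q_{21}^d$ is bounded by $B_{21}\|e_{21}\|$ only through Lemma \ref{lem:cite2}(ii), which compares error vectors built from different weight matrices; and the factor $\tfrac{1}{\sqrt 2}$ does not come from $\|e_{21}^\al\|,\|e_{21}^\be\|\le 1$ as you suggest, but from the Frobenius-norm estimate of $E_{\W_2}=\tr[R_2^\T K_{21}R_1Q_{21}^d]\I-R_2^\T K_{21}R_1Q_{21}^d$ after applying \refeqn{xAAx}. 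Without identifying this weight-comparison argument, your constants $\Gamma_{21}$ and $B_{21}$ are never defined and the proof of (iii)--(iv) does not close.
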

	\begin{proof}
	Seen Appendix, Section \ref{subsec:pRel} .
	\end{proof}

\subsection{\tcb{Position Tracking Error Variables}}
The position and velocity error vectors of Spacecraft 1 are given as 
	\begin{align}
	e_{x_1} &=x_1-x_1^d, \\
	e_{v_1} &=v_1-v_1^d =\dot{e}_{x_1}.\label{eq:dex1} 
	\end{align} 
Similarly, the relative position and relative velocity error vectors are given by
	\begin{align}
	e_{x_{21}} &=x_{21}-x_{21}^d, \\
	e_{v_{21}} &=v_{21}-v_{21}^d =\dot{e}_{x_{21}}, \label{eq:dex2} 
	\end{align} 

\section{Formation Control System}

Based on the error variables defined at the previous section, control systems are designed as follows. 

\subsection{Formation Control for Two Spacecraft}
\begin{prop} \label{prop:controller}
Consider the formation of two spacecraft shown in Fig \ref{fig:FS}. For positive constants, $k_{\W_i}$, $k_{x_i}$, and $k_{v_i}$, $i\in\{1,2\}$, the control inputs are chosen as follows
	\begin{align}
 	u_1 &=-e_b -k_{\W_1} e_{\W_1} +\hat\W^d_1J_1(e_{\W_1}+\W^d_1) +J_1\dot\W^d_1,  \label{eq:u1}\\
 	u_2 &=-e_{21} -k_{\W_2} e_{\W_2} +\hat\W^d_2J_2(e_{\W_2}+\W^d_2) +J_2\dot\W^d_2,  \label{eq:u2} \\
 	f_1 &=-k_{x_1} e_{x_{1}} -k_{v_1}e_{v_1} +m_1\ddot{x}_1^d, \label{eq:f1} \\
 	f_2 &=-k_{x_2} e_{x_{21}} -k_{v_2}e_{v_{21}} +m_2(\ddot{x}_1+\ddot{x}_{21}^d). \label{eq:f2}
 	\end{align}
Then, the zero equilibrium of tracking errors is exponentially stable. 	
\end{prop}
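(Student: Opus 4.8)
The plan is to prove exponential stability by splitting the closed loop into four nearly-decoupled error subsystems---absolute attitude, relative attitude, absolute position, and relative position---building a Lyapunov function for each, and summing them. First I would derive the closed-loop error dynamics. For the translational parts, substituting \refeqn{f1} into $m_1\ddot x_1=f_1$ and using $e_{v_1}=\dot e_{x_1}$ from \refeqn{dex1} yields the linear, autonomous system $m_1\ddot e_{x_1}=-k_{x_1}e_{x_1}-k_{v_1}e_{v_1}$; substituting \refeqn{f2}, writing $x_{21}=x_2-x_1$, the feedforward $m_2\ddot x_1$ cancels the coupling to Spacecraft 1 and leaves $m_2\ddot e_{x_{21}}=-k_{x_2}e_{x_{21}}-k_{v_2}e_{v_{21}}$. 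Both are exponentially stable mass--spring--damper systems. For the rotational parts, I substitute $u_1$ from \refeqn{u1} into \refeqn{Wdot}, write $\W_1=e_{\W_1}+\W_1^d$, and use $\hat\W_1^d J_1\W_1-\hat\W_1 J_1\W_1=-\hat e_{\W_1}J_1\W_1$ so that the gyroscopic and feedforward terms collapse to $J_1\dot e_{\W_1}=-e_b-k_{\W_1}e_{\W_1}-\hat e_{\W_1}J_1\W_1$, and identically $J_2\dot e_{\W_2}=-e_{21}-k_{\W_2}e_{\W_2}-\hat e_{\W_2}J_2\W_2$.

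Next I would take the Lyapunov candidate $\mathcal{V}=\mathcal{V}_1+\mathcal{V}_{21}+\mathcal{V}_{x_1}+\mathcal{V}_{x_{21}}$ with $\mathcal{V}_1=\tfrac12 e_{\W_1}\cdot J_1 e_{\W_1}+\Psi_1+c_1\,e_b\cdot e_{\W_1}$, $\mathcal{V}_{21}=\tfrac12 e_{\W_2}\cdot J_2 e_{\W_2}+\Psi_{21}+c_{21}\,e_{21}\cdot e_{\W_2}$, $\mathcal{V}_{x_1}=\tfrac12 k_{x_1}\|e_{x_1}\|^2+\tfrac12 m_1\|e_{v_1}\|^2+c_{x_1}m_1\,e_{x_1}\cdot e_{v_1}$, and $\mathcal{V}_{x_{21}}$ defined analogously. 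The cross terms with small weights $c_1,c_{21},c_{x_1},c_{x_2}$ are what deliver \emph{exponential} (not merely asymptotic) decay. Positive-definiteness of $\mathcal{V}$, i.e. the sandwich $\underline{c}\|z\|^2\le\mathcal{V}\le\overline{c}\|z\|^2$ in the full error state $z$, follows from the locally quadratic bounds \refeqn{psi1} and \refeqn{psi21}, together with the magnitude bounds $\|e_b\|\le k_{b_A}+k_{b_B}$ of Proposition~\ref{prop:Att_Err}(iii) and $\|e_{21}^\al\|,\|e_{21}^\be\|\le1$, provided the cross weights are chosen small enough.

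The central computation is $\dot{\mathcal{V}}$. The triple-product identity $e_{\W_1}\cdot(\hat e_{\W_1}J_1\W_1)=0$ gives $e_{\W_1}\cdot J_1\dot e_{\W_1}=-e_b\cdot e_{\W_1}-k_{\W_1}\|e_{\W_1}\|^2$, and when combined with Proposition~\ref{prop:Att_Err}(iv) the indefinite coupling $e_b\cdot e_{\W_1}$ cancels exactly. The cross term $c_1\,e_b\cdot\dot e_{\W_1}$ then supplies the crucial negative-definite term $-c_1\,e_b\cdot J_1^{-1}e_b\le-\tfrac{c_1}{\lambda_{\max}(J_1)}\|e_b\|^2$ in the otherwise uncontrolled direction. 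The leftover terms---the LOS-motion disturbance $\Gamma\|e_b\|$ (respectively $\Gamma_{21}\|e_{21}\|^2$ from Proposition~\ref{prop:Rel_Err}(iii)), bounded through $B_\mu$ of \refeqn{mu}, $\Bd$ of \refeqn{Bd}, and the rate bound of Proposition~\ref{prop:Att_Err}(v)---are dominated by indefinite $\|e_b\|\|e_{\W_1}\|$ contributions, so that $\dot{\mathcal{V}}_1\le-w^\T M_1 w$ with $w=(\|e_b\|,\|e_{\W_1}\|)$, and analogously $\dot{\mathcal{V}}_{21}\le-w_{21}^\T M_{21}w_{21}$ and negative-definite bounds for the two position blocks.

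I expect the main obstacle to be establishing positive-definiteness of the coefficient matrices $M_1$ and $M_{21}$, which couples the admissible cross weights $c_1,c_{21}$ to the control gains $k_{\W_1},k_{\W_2}$ and to the disturbance bounds $B_\mu,\Bd$, and which restricts the argument to a sublevel set where $\Psi_1<\psi_1<h_1$ and $\Psi_{21}<\phi_{21}<n_1$ (the region in which \refeqn{psi1} and \refeqn{psi21} hold). A secondary subtlety is the cascade coupling created by \refeqn{Wijd}: since $\W_2^d=\W_{21}^d+{Q_{21}^d}^\T\W_1$ depends on the \emph{actual} leader rate $\W_1$, the relative-attitude desired signal (and its derivative, appearing in $u_2$) is state-dependent; I would handle this by noting that once the leader subsystem is stable, $\W_1$ and hence $\W_2^d,\dot\W_2^d$ remain bounded, so the interconnection terms stay within the dominated class. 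Summing the four negative-definite estimates gives $\dot{\mathcal{V}}\le-z^\T M z$ for a positive-definite $M$, and together with the quadratic sandwich on $\mathcal{V}$ this yields exponential stability of the zero equilibrium on the chosen sublevel set by the standard Lyapunov criterion.
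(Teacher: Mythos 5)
Your proposal follows essentially the same route as the paper's own proof: the identical closed-loop error dynamics, the same Lyapunov function built from the kinetic-energy terms, the configuration error functions $\Psi_1,\Psi_{21}$, the quadratic position/velocity terms, and small cross terms, with $\dot{\mathcal{V}}$ organized via Propositions \ref{prop:Att_Err} and \ref{prop:Rel_Err} into quadratic forms ($M_1$, $M_{21}$, and the translational blocks) made negative definite by choosing the cross weights sufficiently small. The differences are cosmetic only --- placement of $J_i$ and $m_i$ inside the cross terms, separate constants $c_1,c_{21},c_{x_1},c_{x_2}$ instead of the paper's shared $c_r,c_t$, and your extra cascade remark about $\W_2^d$ depending on $\W_1$, which the paper simply subsumes under the boundedness Assumption 1.
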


\begin{proof}
The error dynamics are as follows. From \refeqn{f}, we have
	\begin{align}
	\dot{e}_{v_1}=\frac{f_1}{m_1}-\ddot{x}_{1}^d, \quad 
	\dot{e}_{v_{21}}=\frac{f_2}{m_2}-\ddot{x}_{1}-\ddot{x}_{21}^d, \label{eq:useful1}
	\end{align}
By using \refeqn{Wdot}, \refeqn{error5}, \refeqn{eW12}, we can obtain
	\begin{align}
	J_i\dot{e}_{\W_i} 
	&=[J(e_{\W_i}+\W_i^d)]^\wedge e_{\W_i} -\hat{\W}_i^dJ_i(e_{\W_i} +\W_i^d) \no\\
	&\quad -J_i\dot{\W}_i^d +u_i, \label{eq:useful2}
	\end{align}
for $i=\{1,2\}$. 
	
For positive constants $c_r$ and $c_t$, let the Lyapunov function be
\begin{align}
\V=\V_{1}^r+\V_{21}^r +\V^t_{1}+\V^t_{21},
\end{align}
where
\begin{align}
	\V_{1}^r &= \frac{1}{2}e_{\W_1}\cdot J_1e_{\W_1} +\Psi_{1} +c_rJ_1e_{\W_1}\cdot e_{b},  \label{eq:v1r}\\
	\V_{21}^r &= \frac{1}{2}e_{\W_2}\cdot J_2e_{\W_2} +\Psi_{21} +c_rJ_2e_{\W_2}\cdot e_{21},  \label{eq:v21r}\\
	\V^t_{1} &=\frac{1}{2}k_{x_1}e_{x_{1}}^\T e_{x_{1}} +\frac{1}{2}m_1e_{v_{1}}^\T e_{v_{1}}, +c_te_{x_{1}}^\T e_{v_{1}}, \label{eq:v1t}\\
	\V^t_{21} &=\frac{1}{2}k_{x_2}e_{x_{21}}^\T e_{x_{21}} +\frac{1}{2}m_2e_{v_{21}}^\T e_{v_{21}} +c_te_{x_{21}}^\T e_{v_{21}}. \label{eq:v21t}
	\end{align}
From \refeqn{psi1}, \refeqn{psi21}, the Lyapunov function is positive-definite about the equilibrium $(e_b,\,e_{21},\,e_{\W_i},\,e_{x_i},\,e_{v_i})=(0,0,0,0,0)$ for $i\in\{1,2\}$, provided that the constants $c_r,c_t$ are sufficiently small. 
		
The time-derivative of $\V_1^r$, $\V_{21}^r$ are given by	
	\begin{align*} 
	\dot{\V}_{1}^r 
	&=(e_{\W_1}+c_re_b)^\T J_1\dot{e}_{\W_1} +\dot\Psi_1 +c_rJ_1e_{\W_1}^\T\dot{e}_b, \\
	\dot{\V}_{21}^r 
	&=(e_{\W_2}+c_re_{21})^\T J_2\dot{e}_{\W_2} +\dot\Psi_{21} +c_rJ_2e_{\W_2}^\T\dot{e}_{21}.
	\end{align*}
By using the properties (iv),(v) of Proposition \ref{prop:Att_Err}, and the properties (iii),(iv) of Proposition \ref{prop:Rel_Err}, and also substituting \refeqn{useful2}, \refeqn{u1}, and \refeqn{u2}, the time-derivative of $\V_1^r$, $\V_{21}^r$ can be rearranged as
	\begin{align}
		\dot{\V}_{1}^r 
		&\leq -[k_{\W_1} -c_r\lam_{M_1}(\frac{1}{\sqrt{2}}+1)\bar{k}_{b} ]\|e_{\W_1}\|^2 \no\\
		&\quad +c_r[\lam_{M_1}(2\Bd +B_{1})+k_{\W_1}]\|e_{\W_1}\|\|e_{b}\| \no\\
		&\quad -(c_r-\Gamma)\|e_{b}\|^2, \label{eq:dv1r} \\
	\dot{\V}_{21}^r 
	&\leq -[k_{\W_2} -c_r\lam_{M_2}(\frac{1}{\sqrt{2}}+1)\bar{k}_{21} ]\|e_{\W_2}\|^2 \no\\
	&\quad +c_r[\lam_{M_2}(2\Bd +B_{21})+k_{\W_2}]\|e_{\W_2}\|\|e_{21}\| \no\\
	&\quad -(c_r-\Gamma_{21})\|e_{21}\|^2, \label{eq:dv2r}
	\end{align}	
	
For the translational dynamics, from \refeqn{dex1} and \refeqn{dex2}, we have	
	\begin{align*} 	
	\dot{\V}_{1}^t 
	&= k_{x_1}e_{x_1}^\T e_{v_1} +c_t\|e_{v_1}\|^2 +(m_1e_{v_1}+ce_{x_1})^\T\dot{e}_{v_1} \label{}\\
	\dot{\V}_{21}^t 
	&= k_{x_2}e_{x_{21}}^\T e_{v_{21}} +c_t\|e_{v_{21}}\|^2 +(m_2e_{v_{21}}+ce_{x_{21}})^\T\dot{e}_{v_{21}}.
	\end{align*}
Substituting \refeqn{useful1} with the control inputs \refeqn{f1}, \refeqn{f2}, we obtain
	\begin{align} 
	\dot{\V}_{1}^t 
	&=-\frac{ck_{x_1}}{m_1}\|e_{x_1}\|^2 -(k_{v_1}-c_t)\|e_{v_1}\|^2 -\frac{c_tk_{v_1}}{m_1}e_{x_1}^\T e_{v_1},\label{eq:dv1t}\\
	\dot{\V}_{21}^t 
	&=-\frac{ck_{x_2}}{m_2}\|e_{x_{21}}\|^2 -(k_{v_2}-c_t)\|e_{v_{21}}\|^2 -\frac{c_tk_{v_2}}{m_2}e_{x_{21}}^\T e_{v_{21}},\label{eq:dv2t}
	\end{align}
	
From \refeqn{dv1r}, \refeqn{dv2r}, \refeqn{dv1t} and \refeqn{dv2t}, the time-derivative of the complete Lyapunov function $\V$ can be written as
	\begin{align} 
	\dot{\V}\leq-(\zeta_1^\T{M_1}\zeta_1 +\zeta_{21}^\T{M_{21}}\zeta_{21} +\xi_1^\T N_1\xi_1 +\xi_{21}^\T N_{21}\xi_{21}),
	\end{align}	
where $\zeta_1=[\|e_b\|, \|e_{\W_1}]^\T$, $\zeta_{21}=[\|e_{21}\|, \|e_{\W_2}\|]^\T$, $\xi_1=[\|e_{x_1}\|,\|e_{v_1}\|]^\T$ and $\xi_{21}=[\|e_{x_{21}}\|,\|e_{v_{21}}\|]^\T\in\Re^2$, and the matrices $M_1,M_{21},N_1,N_{21}\in\Re^{2\times 2}$ are defined as
\begin{align*} 
    M_1 &=\frac{1}{2}\left[\begin{array}{cc}2(c_r-\Gamma) & -c_r\Lambda_1 \\ -c_r\Lambda_1 & 2k_{\W_1}-c_r\lam_{M_1}\bar{k}_{b}(\sqrt{2}+2) \end{array}\right], \\
    M_{21} &=\frac{1}{2}\left[\begin{array}{cc}2(c_r-\Gamma_{21}) & -c_r\Lambda_2 \\ -c_r\Lambda_2 & 2k_{\W_2}-c_r\lam_{M_2}\bar{k}_{21}(\sqrt{2}+2) \end{array}\right], \\
    N_1 &=\frac{1}{2m_1}\left[\begin{array}{cc} 2c_tk_{x_1} & c_tk_{v_1} \\ c_tk_{v_1} & 2m_1(k_{v_1}-c_t)\end{array}\right], \\
    N_{21} &=\frac{1}{2m_2}\left[\begin{array}{cc} 2c_tk_{x_{21}} & c_tk_{v_{21}} \\ c_tk_{v_{21}} & 2m_2(k_{v_2}-c_t)\end{array}\right], 
	\end{align*}	
where $\Lambda_1=\lam_{M_1}(2\Bd +B_{1})+k_{\W_1}\in\Re$ and $\Lambda_2=\lam_{M_2}(2\Bd +B_{21})+k_{\W_2}\in\Re$. If the constants $c_t$ and $c_r$ are sufficiently small, we can show that all of matrices $M_1$, $M_{21}$, $N_1$ and $N_{21}$ are positive definite. This implies that the equilibrium $(e_b,\,e_{21},\,e_{\W_i},\,e_{x_i},\,e_{v_i})=(0,0,0,0,0)$ for $i\in\{1,2\}$, i.e., the desired formation, is exponentially stable.

\end{proof}

\subsection{Formation Control for Multiple Spacecraft}

The preceding results for two spacecraft are readily generalized for an arbitrary number of spacecraft. Here, we present the controller structures as follows, without stability proof that can be obtained by generalizing the proof of Proposition \ref{prop:controller}.

\begin{prop} \label{prop:controller2}
Consider $n$ spacecraft in the formation, the $i$-th spacecraft is paired serially with the $(i-1)$-th spacecraft for $i=\in\{2,3,\cdots,n\}$. For positive constants, $k_{{\W}_1}$, $k_{x_1}$, $k_{v_i}$,  $k_{\W_i}$, $k_{x_i}$, and $k_{v_i}$,  for $i\in\{2,3,\cdots,n\}$, the control inputs are chosen as follows
	\begin{align}
 	u_1 &=-e_b -k_{\W_1} e_{\W_1} +\hat\W^d_1J_1(e_{\W_1}+\W^d_1) +J_1\dot\W^d_1, \\ 
 	u_i &=-e_{i,i-1} -k_{\W_i} e_{\W_i} +\hat\W^d_iJ_i(e_{\W_i}+\W^d_i) +J_i\dot\W^d_i, \\ 
 	f_1 &=-k_{x_1} e_{x_{1}} -k_{v_1}e_{v_1} +m_1\ddot{x}_1^d, \\ 
 	f_i &=-k_{x_i} e_{x_{i,i-1}} -k_{v_i}e_{v_{i,i-1}} +m_i(\ddot{x}_{i-1}+\ddot{x}_{i,i-1}^d). 
 	\end{align}
Then the zero equilibrium of tracking errors is exponentially stable.
\end{prop}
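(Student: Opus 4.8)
The plan is to extend the single-pair analysis of Proposition~\ref{prop:controller} to the whole daisy chain by assigning one rotational and one translational storage function to each spacecraft and then showing that the total Lyapunov rate splits into one negative-definite quadratic form per subsystem. Concretely, I would take
\begin{align*}
\V = \V_1^r + \V_1^t + \sum_{i=2}^n\parenth{\V_{i,i-1}^r + \V_{i,i-1}^t},
\end{align*}
where $\V_1^r,\V_1^t$ are the leader terms \refeqn{v1r}, \refeqn{v1t}, and for each follower $i\in\{2,\dots,n\}$,
\begin{align*}
\V_{i,i-1}^r &= \frac{1}{2}e_{\W_i}\cdot J_ie_{\W_i} + \Psi_{i,i-1} + c_rJ_ie_{\W_i}\cdot e_{i,i-1}, \\
\V_{i,i-1}^t &= \frac{1}{2}k_{x_i}\|e_{x_{i,i-1}}\|^2 + \frac{1}{2}m_i\|e_{v_{i,i-1}}\|^2 + c_te_{x_{i,i-1}}^\T e_{v_{i,i-1}}
\end{align*}
are the relabelings of \refeqn{v21r}, \refeqn{v21t}. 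Positive-definiteness of $\V$ about the zero equilibrium then follows termwise from the local quadratic bounds \refeqn{psi1}, \refeqn{psi21}, applied to each pair $(i,i-1)$, once $c_r,c_t$ are taken sufficiently small, exactly as in the two-spacecraft case.

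The crux is that, under the chosen control laws, each subsystem's error dynamics becomes autonomous in its own error variables, so no cross term between distinct spacecraft survives in $\dot\V$. First I would verify the translational decoupling: since $\ddot x_{i,i-1}=f_i/m_i-\ddot x_{i-1}$, substituting $f_i=-k_{x_i}e_{x_{i,i-1}}-k_{v_i}e_{v_{i,i-1}}+m_i(\ddot x_{i-1}+\ddot x_{i,i-1}^d)$ cancels the neighbor acceleration $\ddot x_{i-1}$ exactly and yields $\dot e_{v_{i,i-1}}=\frac{1}{m_i}(-k_{x_i}e_{x_{i,i-1}}-k_{v_i}e_{v_{i,i-1}})$, free of any other spacecraft's variables. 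Next, the rotational decoupling: the generalization of \refeqn{useful2} with $u_i$ substituted absorbs the feedforward terms $\hat\W_i^dJ_i(e_{\W_i}+\W_i^d)+J_i\dot\W_i^d$, leaving $J_i\dot e_{\W_i}=[J_i(e_{\W_i}+\W_i^d)]^\wedge e_{\W_i}-e_{i,i-1}-k_{\W_i}e_{\W_i}$; here the leading gyroscopic term obeys $e_{\W_i}\cdot([J_i(e_{\W_i}+\W_i^d)]^\wedge e_{\W_i})=0$, while its remaining $c_re_{i,i-1}$-contribution stays \emph{within} the pair and feeds the off-diagonal of the pair matrix, just as for $M_{21}$. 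Thus all inter-spacecraft coupling, which enters only through $\W_i^d,\dot\W_i^d$ (depending on the predecessor's angular velocity via \refeqn{Wijd}) and through $\ddot x_{i-1}$, is exactly removed by feedforward.

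With these two facts established, differentiating $\V$ and invoking properties (iv),(v) of Proposition~\ref{prop:Att_Err} and (iii),(iv) of Proposition~\ref{prop:Rel_Err} for each pair reproduces, block by block, the estimates \refeqn{dv2r}, \refeqn{dv2t}, giving
\begin{align*}
\dot\V \leq -\parenth{\zeta_1^\T M_1\zeta_1+\xi_1^\T N_1\xi_1} -\sum_{i=2}^n\parenth{\zeta_{i,i-1}^\T M_{i,i-1}\zeta_{i,i-1}+\xi_{i,i-1}^\T N_{i,i-1}\xi_{i,i-1}},
\end{align*}
with each $2\times2$ block $M_{i,i-1},N_{i,i-1}$ of the same form as $M_{21},N_{21}$ (with $B_{i,i-1}$, $\lam_{M_i}$, $\Bd$ in place of the pair-$2$ constants). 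Choosing $c_r,c_t$ small enough renders every block positive definite, so $\dot\V$ is negative definite, and together with the quadratic bounds on $\V$ this yields exponential stability by the standard comparison argument. The main obstacle is the rigorous verification of this block-diagonal structure of $\dot\V$: I must confirm that the feedforward cancellations hold for every $i$ and that no residual cross term between pairs remains, in particular that the feedforward quantities $\ddot x_{i-1}$, $\W_i^d$, $\dot\W_i^d$ are well defined and computable along the serial chain, each depending only on the immediate predecessor (e.g. $\ddot x_{i-1}=f_{i-1}/m_{i-1}$ recursively). Once the decoupling is in place, the positive-definiteness estimates are identical to those already carried out for Proposition~\ref{prop:controller} and require no new ideas.
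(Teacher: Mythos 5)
Your proposal is correct and follows exactly the route the paper intends: the paper omits the proof of Proposition~\ref{prop:controller2}, stating only that it is obtained by generalizing the proof of Proposition~\ref{prop:controller}, and your per-pair decomposition $\V = \V_1^r+\V_1^t+\sum_{i=2}^n(\V_{i,i-1}^r+\V_{i,i-1}^t)$ with feedforward cancellation of $\ddot x_{i-1}$, $\W_i^d$, $\dot\W_i^d$ along the chain is precisely that generalization. The resulting block-diagonal bound on $\dot\V$ and the smallness conditions on $c_r,c_t$ mirror the two-spacecraft estimates \refeqn{dv1r}--\refeqn{dv2t} term for term, so no new ideas are needed or missing.
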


\section{Numerical Simulation}	

Two simulation results are presented: (i) formation tracking control for two spacecraft, and (ii) formation stabilization for four spacecraft. 

\subsection{Formation Tracking for Two Spacecraft}

Suppose $n=2$. The mass and the inertia matrix are chosen as $m_1=m_2=30\,\mathrm{kg}$ and $J_1=J_2=\mathrm{diag}[3,2,1]\,\mathrm{kgm^2}$. The desired absolute attitude of the leader, namely $R_1^d(t)$ is specified in terms of $3-2-1$ Euler angles $(\al(t),\,\be(t),\,\gamma(t))$, where 
\begin{gather*}
	\alpha(t)=0,\,\beta(t)=-0.7+\cos(0.2t),\,\gamma(t)=0.5+\sin(2t).
\end{gather*}
The desired relative attitude $Q_{21}^d(t)$ is also defined in terms of another set of Euler angles given by
	\begin{gather*}
	\phi(t)=\sin(0.5t),\,\theta(t)=2,\,\psi(t)=\cos(t)+1.
	\end{gather*}
The initial attitudes for Spacecraft 1 and Spacecraft 2 are chosen as $R_1(0)=R_2(0)=\I$. The initial angular velocity is chosen to be zero for both spacecraft.  

For the translational motion, the desired  position vectors are given by  
		\begin{gather*}
		x_1^d=[\sin(0.04t),\,0,\,-\sin(0.07t)]^\T\\
		x_{21}^d=[2,\, -3+\cos(0.02t),\, 10]^\T.
		\end{gather*}
The initial positions are chosen as $x_1=[0,\,0,\,0]$ and $x_2=[2,\,-1,\,7]$. Control gains are selected to be $k_{\W_1}=k_{\W_2}=7$, $k^\al_{21}=k_{b_1}=25$, $k^\beta_{21}=k_{b_2}=25.1$, $k_{x_1}=k_{x_2}=49$, and $k_{v_1}=k_{v_2}=12.6$. 

The corresponding numerical results are illustrated in Fig \ref{fig:err}, where the attitude error vectors are defined as 
		\begin{gather*} 
		e_{R_1}=\frac{1}{2}	({R_1^d}^\T R_1-R_1^\T R_1^d)^\vee, \\
		e_{Q_{21}}=\frac{1}{2}	({Q_{21}^d}^\T Q_{21}-Q_{21}^\T Q_{21}^d)^\vee.
		\end{gather*}
It is illustrated that tracking errors are nicely converted to zero. 
	
\begin{figure}
\centerline{
	\subfigure[Attitude error vector $e_R,e_{Q_{21}}$] {\includegraphics[width=0.49\columnwidth]{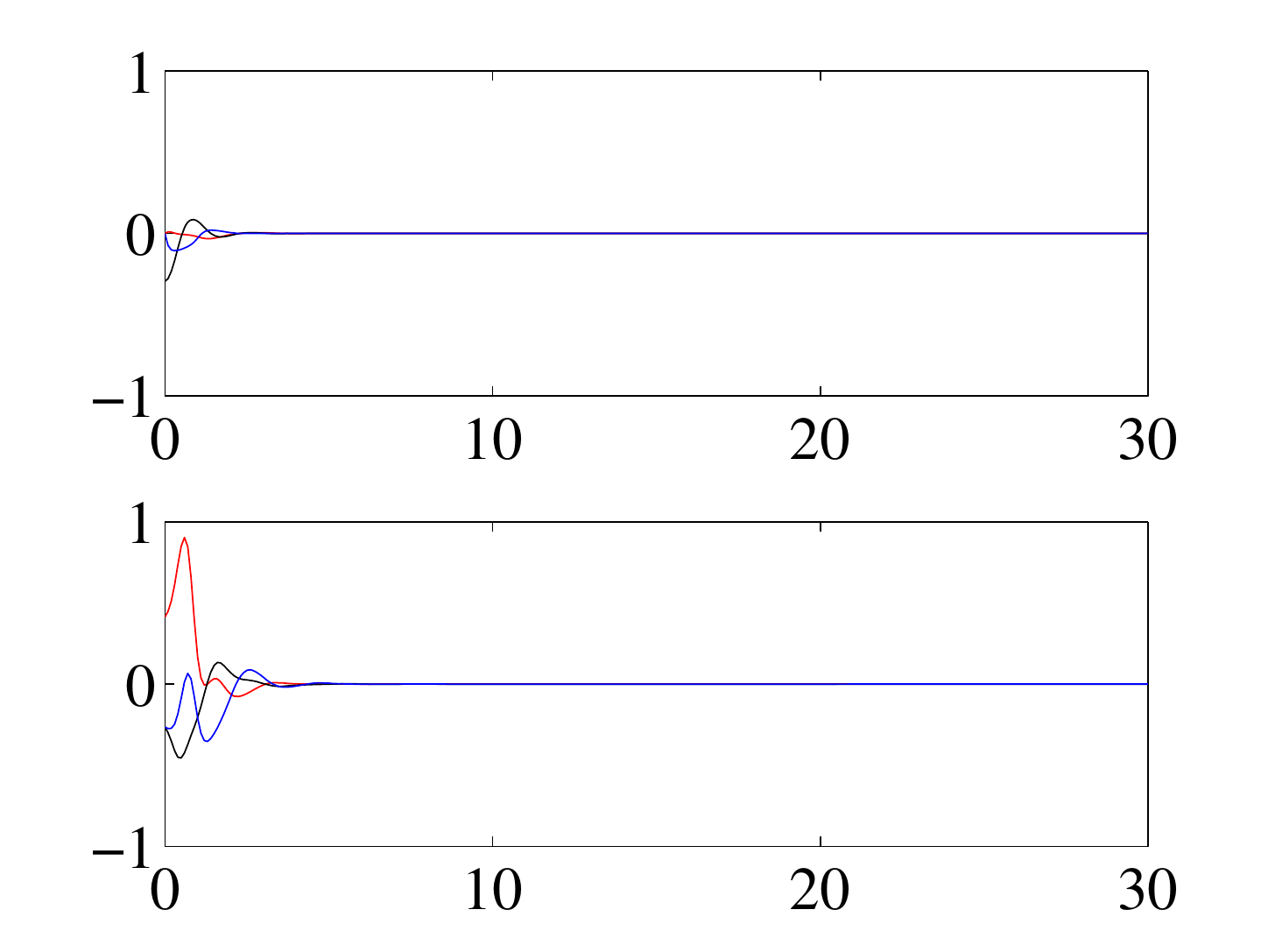}}
	\hspace*{0.001\columnwidth}
	\subfigure[Angular velocity error $e_{\W_1},e_{\W_2}$] {\includegraphics[width=0.49\columnwidth]{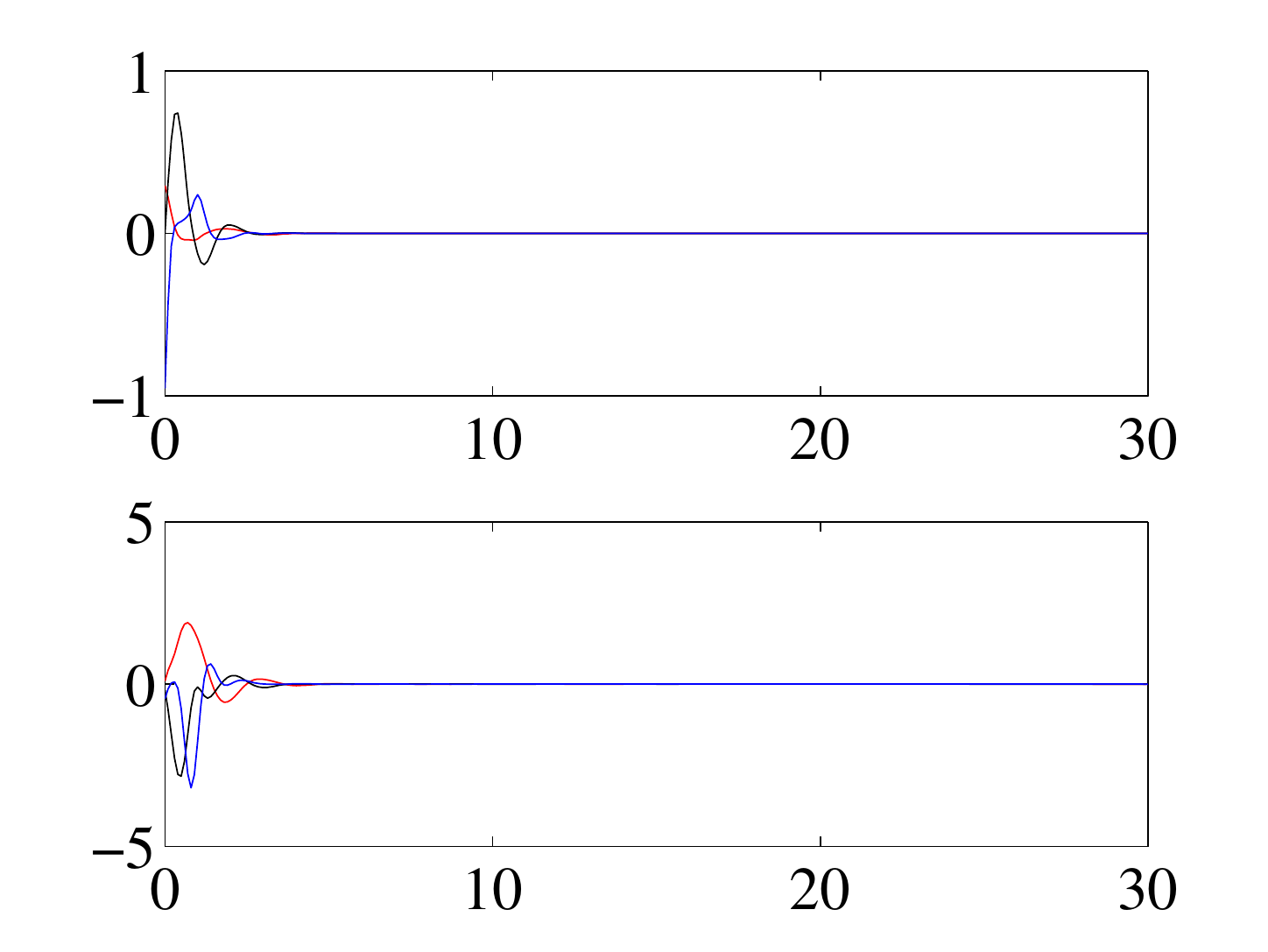}}
}
\centerline{
	\subfigure[Position error vector $e_{x_1},e_{x_{21}}$] {\includegraphics[width=0.47\columnwidth]{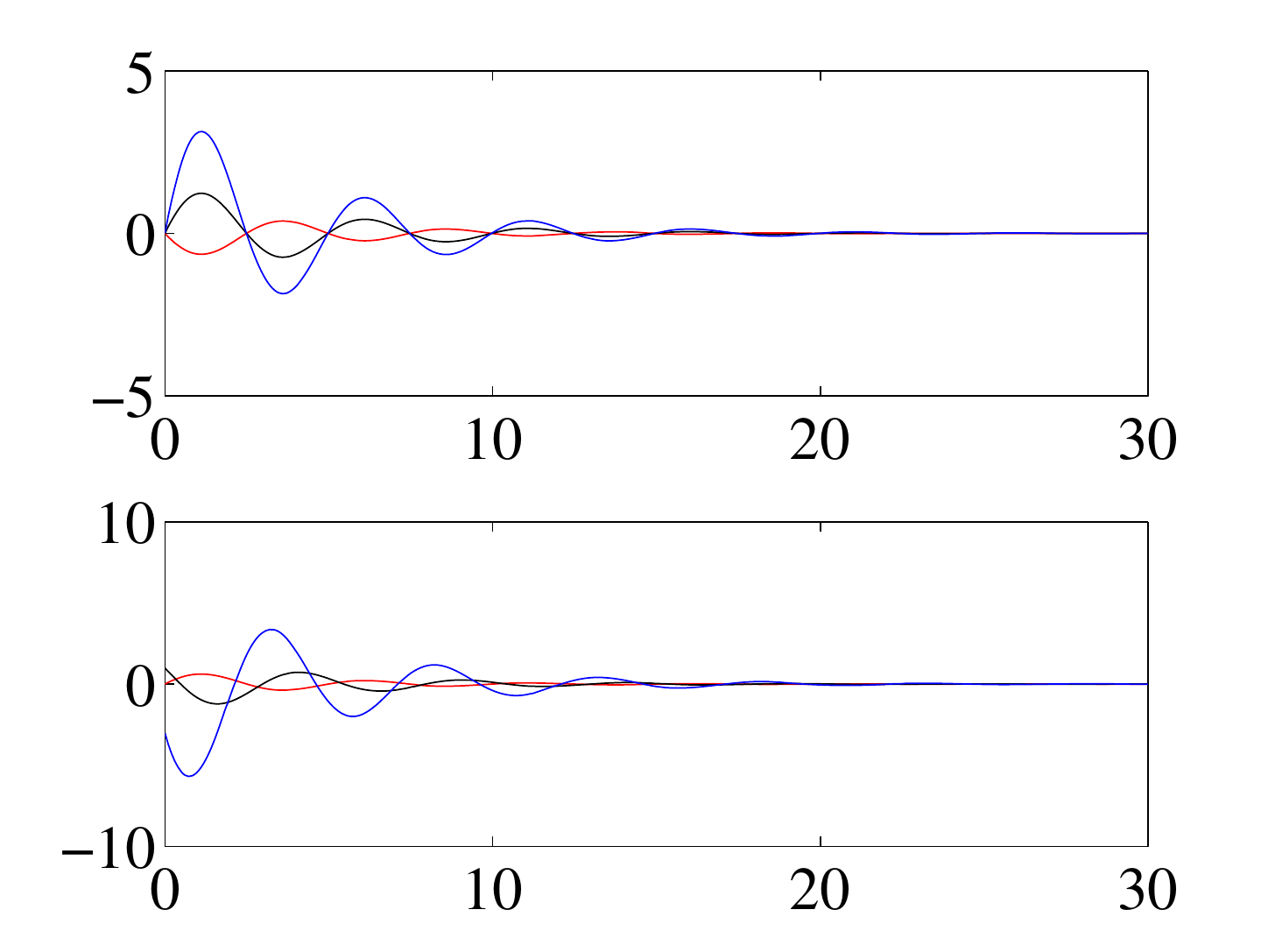}}
	\hspace*{0.03\columnwidth}
	\subfigure[Velocity error vector $e_{v_1},e_{v_{21}}$] {\includegraphics[width=0.47\columnwidth]{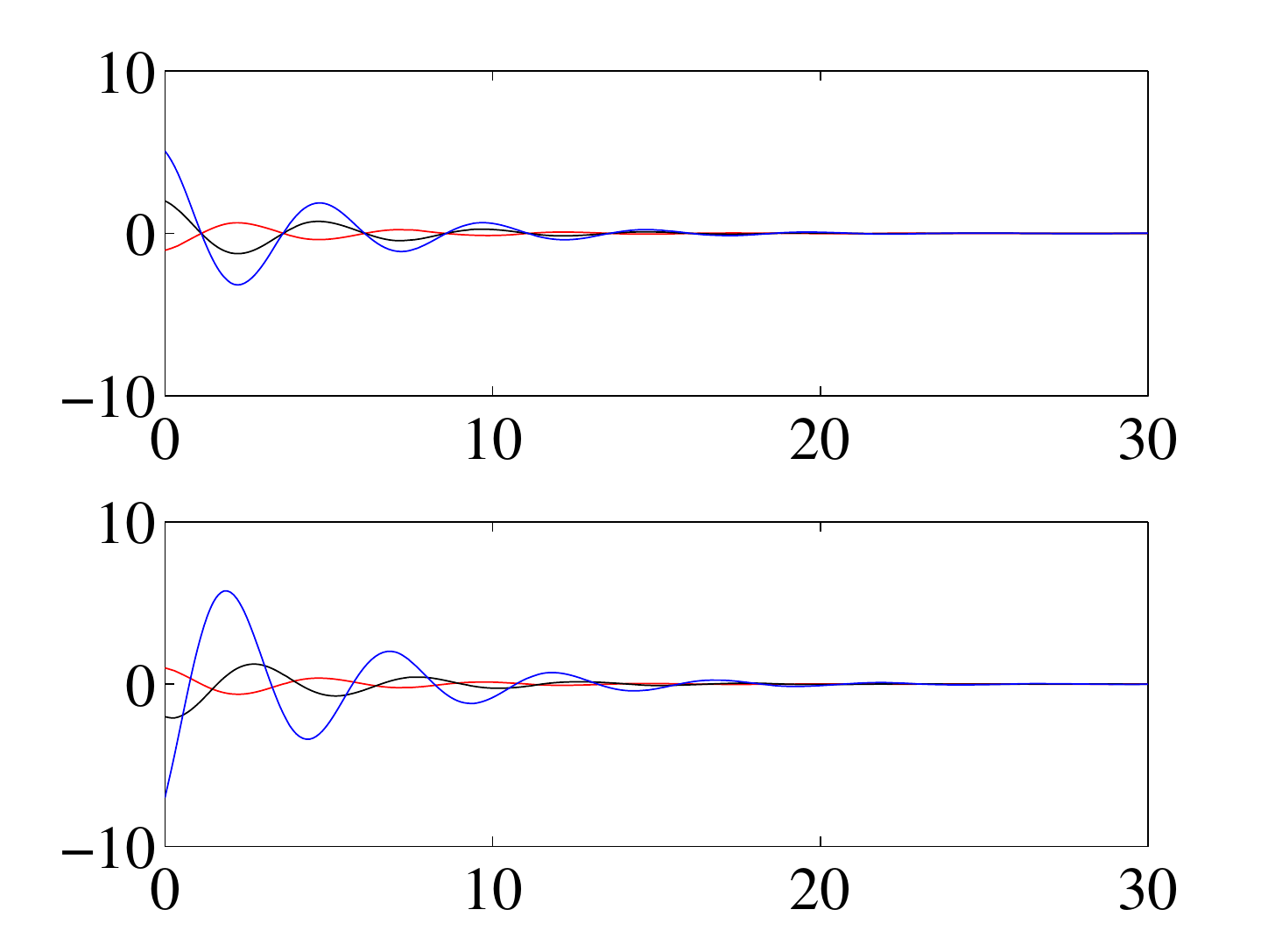}}
}
\centerline{
	\subfigure[Control moment $u_1,u_2$ (Nm)] {\includegraphics[width=0.47\columnwidth]{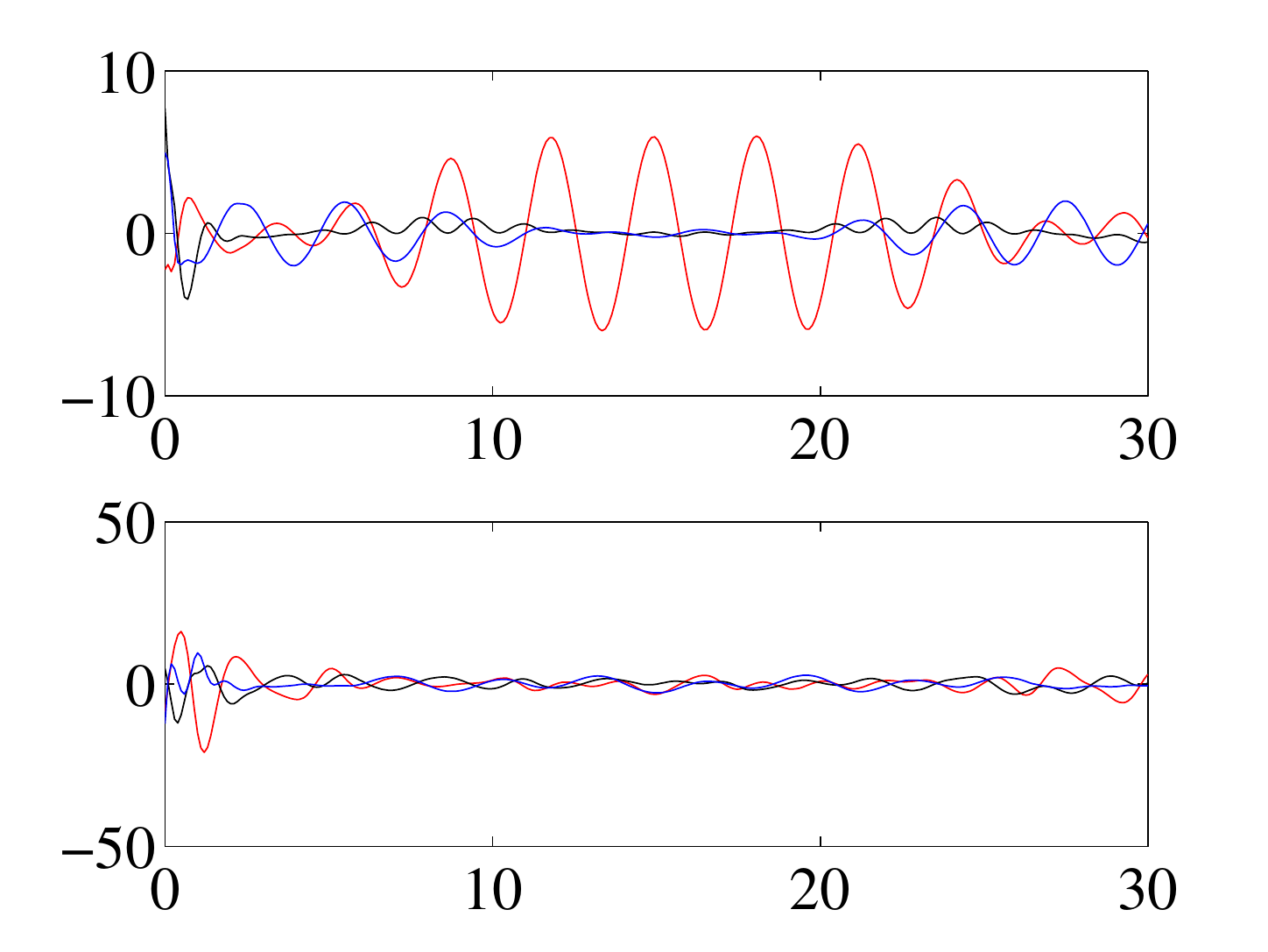}}
	\hspace*{0.03\columnwidth}
	\subfigure[Control force $f_1,f_2$ (N)] {\includegraphics[width=0.47\columnwidth]{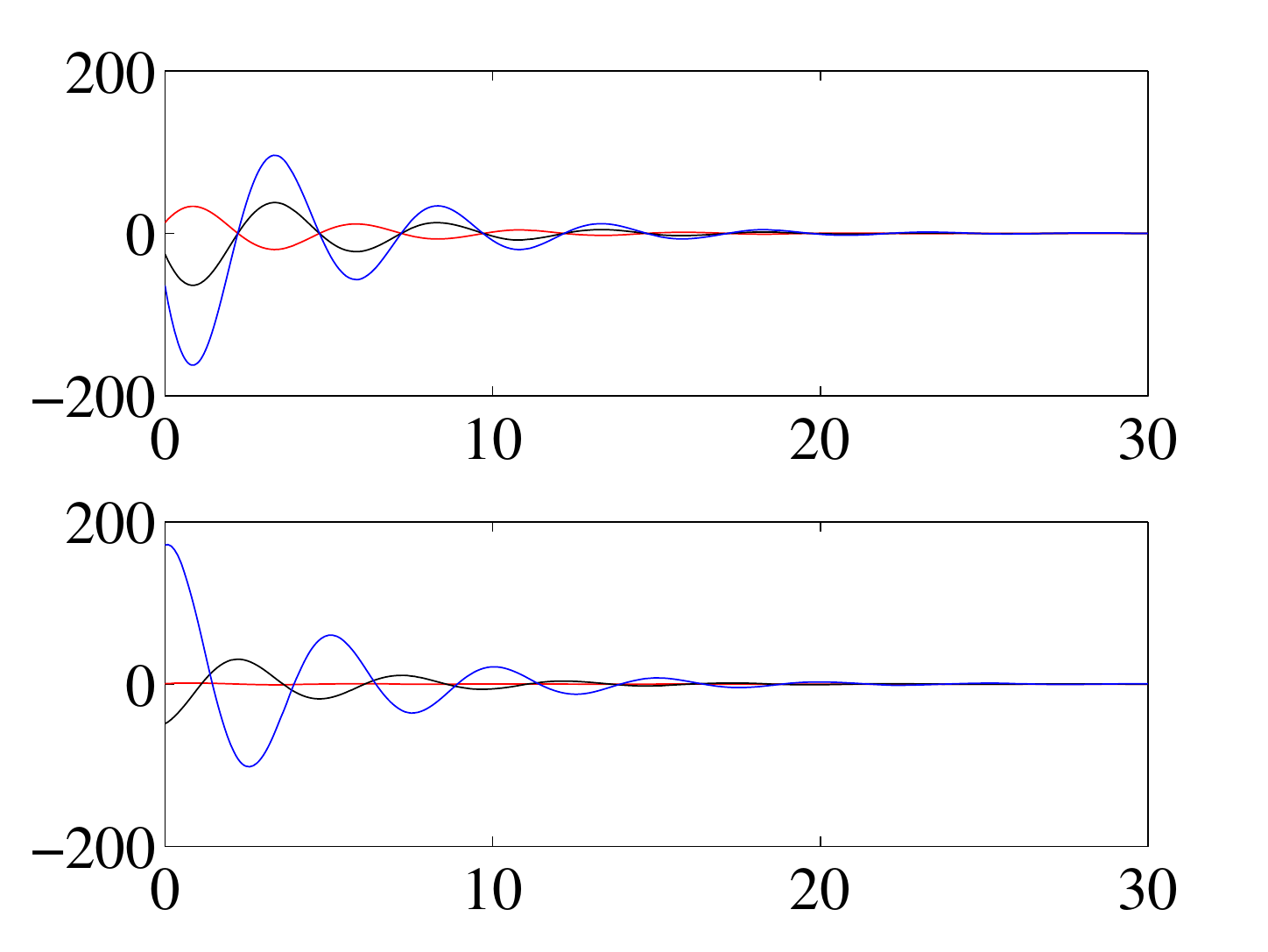}}
}
\caption{Numerical results for two spacecraft in formation.}\label{fig:err}
\vspace*{-0.1cm} 
\end{figure}

\subsection{Formation Control for Four Spacecraft} \label{subsec:quad}
Next, we consider formation control for $n=4$ spacecraft. The mass and the inertia matrix are same as the previous case. The desired attitudes are chosen as $R_1^d=Q_{12}^d=Q_{23}^d=Q_{34}^d=\I$. This represents attitude synchronization. The initial attitudes are given as 
	\begin{gather*} 
	R_1(0)=\exp(0.2\pi\hat{e}_2),\quad R_2(0)=\exp(0.5\pi\hat{e}_1), \\
	R_3(0)=\exp(0.4\pi\hat{e}_1),\quad R_3(0)=\exp(0.8\pi\hat{e}_3),
	\end{gather*}
where $e_1=[1,0,0]^\T$, $e_2=[0,1,0]^\T$, and $e_3=[0,0,1]^\T$. Also, the initial angular velocity is chosen to be zero for each spacecraft. 

The desired position trajectories are chosen as
	\begin{gather*} 
	x_1^d=[-100,\, 0,\,  0]^\T,\\
	x_{21}^d=[100,\, 100,\, 6]^\T,\quad x_{32}^d=x_{43}^d=[0,\, -200,\, 0]^\T.
	\end{gather*}
The initial conditions are  $x_1=[-200,\, 0,\, 0]^\T$, $x_2=[-100,\, -50,\, 0]^\T$, $x_3=[0,\, 0,\, 20]^\T$, $x_4=[100,\, 100,\, -1]^\T$, $v_1=[0,\, 0,\, 0]^\T$, $v_2=[0,\, 0,\, 10]^\T$ and $v_3=v_4=[0,\, 10,\, 0]^\T$.

\begin{figure}
\centerline{
	\subfigure[initial configuration]{
		\includegraphics[width=0.8\columnwidth]{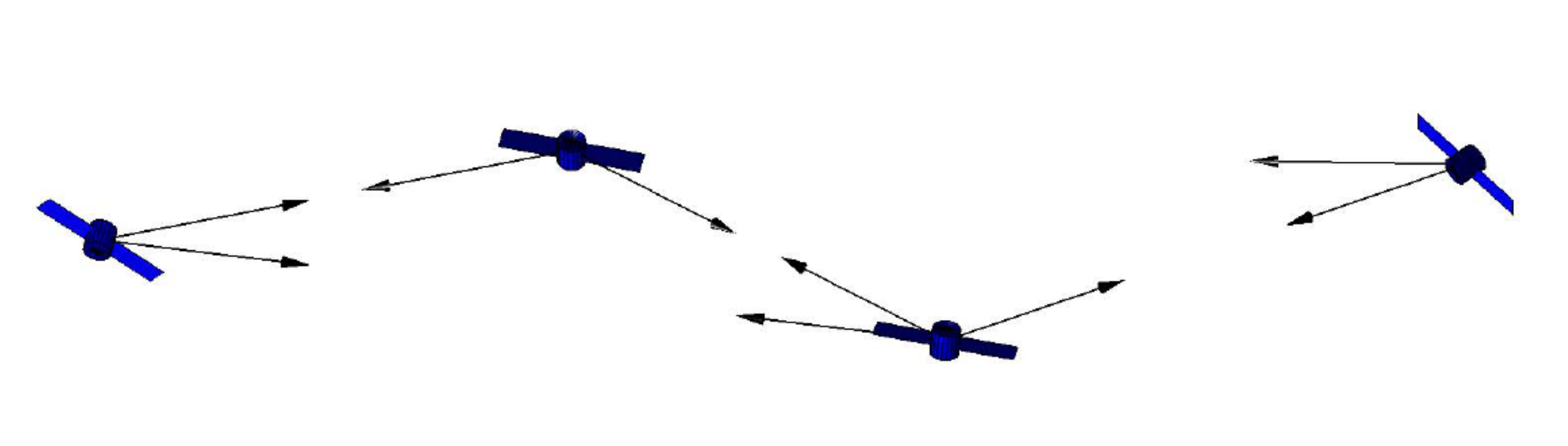}}
}
\centerline{
	\subfigure[terminal configuration]{
		\includegraphics[width=0.8\columnwidth]{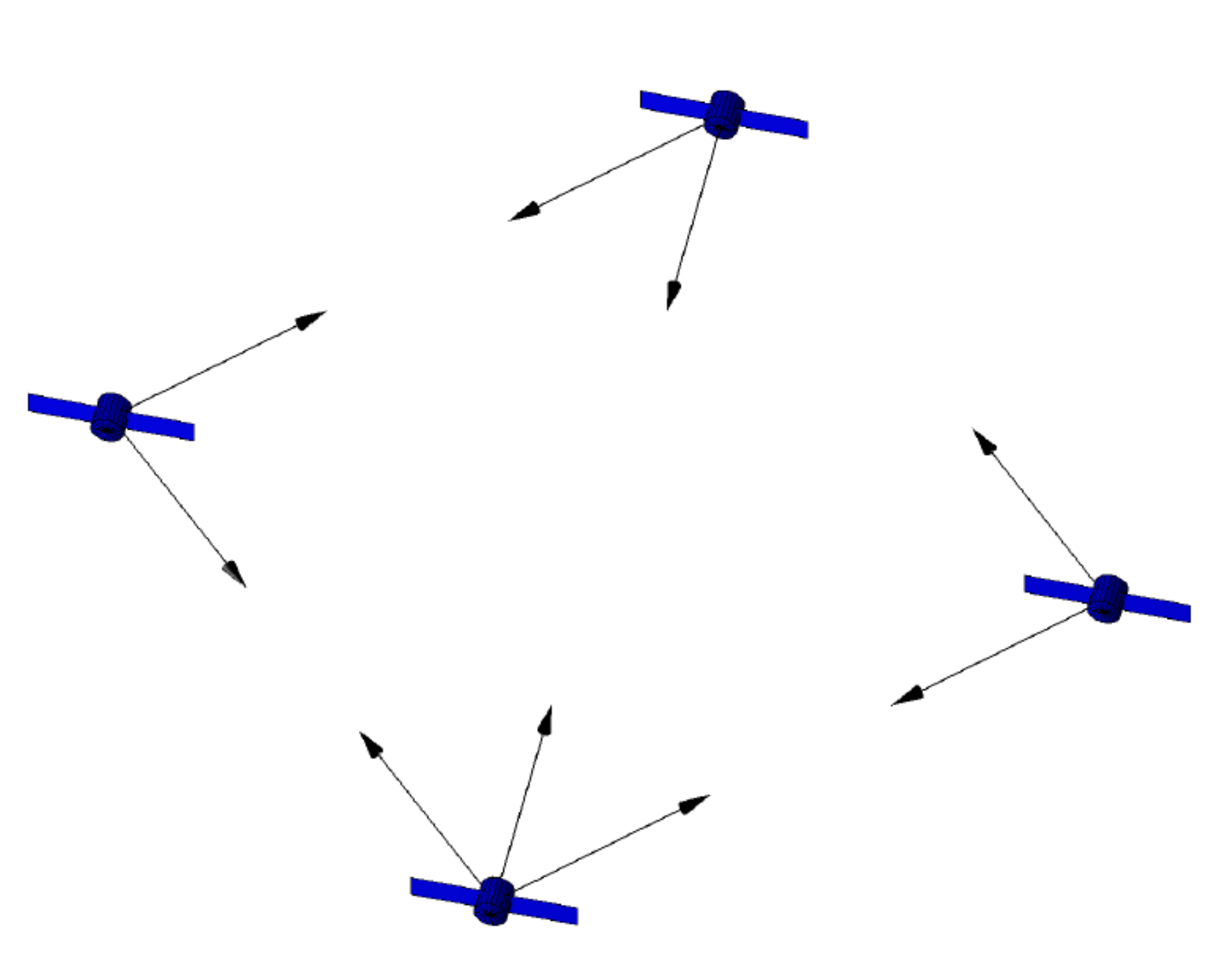}}
}
\caption{The initial configuration and the terminal configuration of four spacecraft}\label{fig:Conf}
\end{figure}
The corresponding numerical results is illustrated by Figure \ref{fig:err2}, where the attitude error vectors are defined as
		\begin{gather*} 
		e_{R_1}=\frac{1}{2}	({R_1^d}^\T R_1-R_1^\T R_1^d)^\vee\in\Re^3, \\
		e_{Q_{i,i-1}}=\frac{1}{2}({Q_{i,i-1}^d}^\T Q_{i,i-1}-Q_{i,i-1}^\T Q_{i,i-1}^d)^\vee\in\Re^3.
		\end{gather*} 
The initial configuration and the terminal configuration of spacecraft are also illustrated at Figure \ref{fig:Conf}.  

\begin{figure}
\centerline{
	\subfigure[Attitude error vector $e_R,e_{Q_{21}},e_{Q_{32}},e_{Q_{43}}$] {\includegraphics[width=0.49\columnwidth]{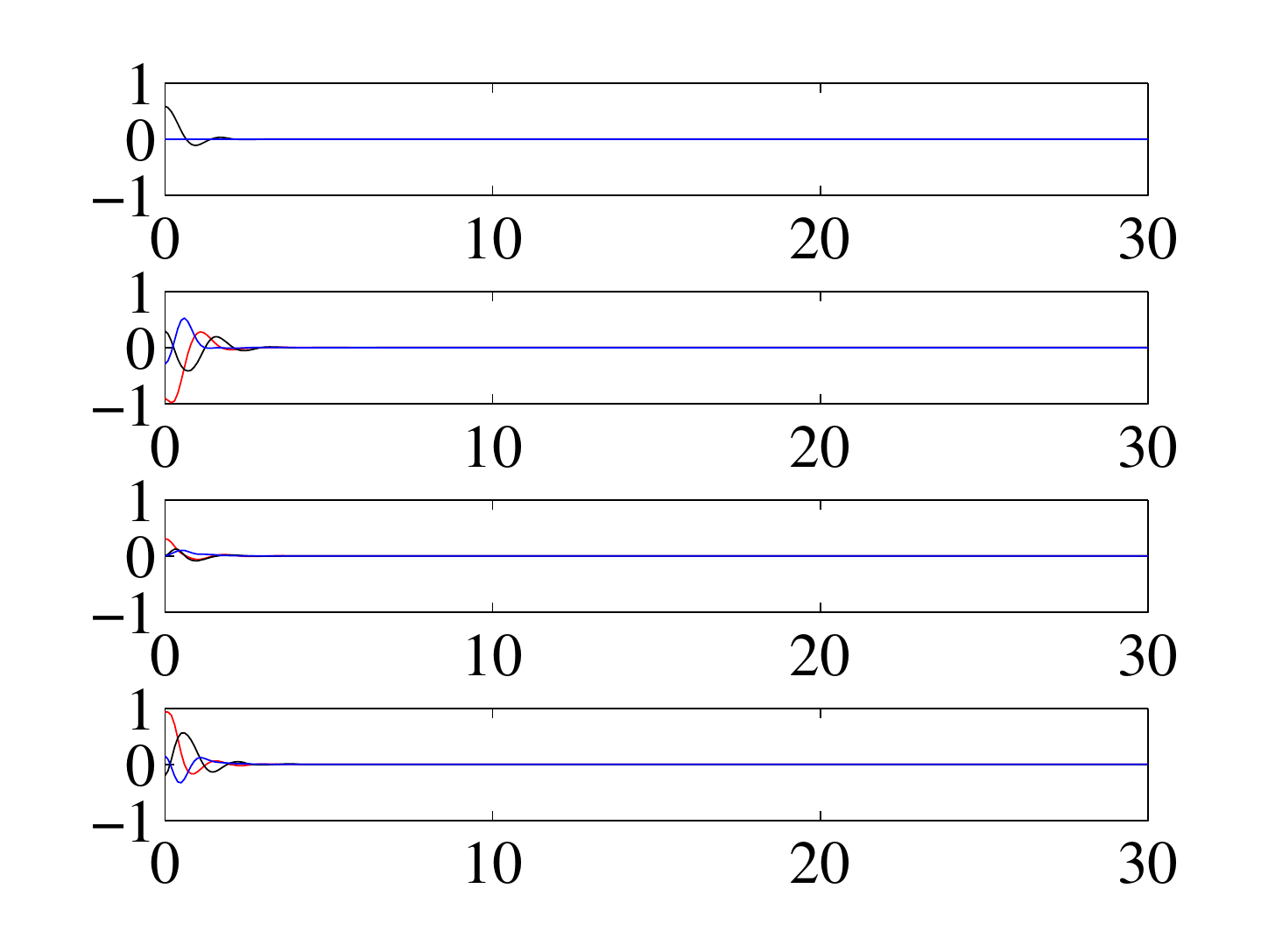}}
	\hspace*{0.001\columnwidth}
	\subfigure[Angular velocity error vector $e_{\W_1},e_{\W_2},e_{\W_3},e_{\W_4}$] {\includegraphics[width=0.49\columnwidth]{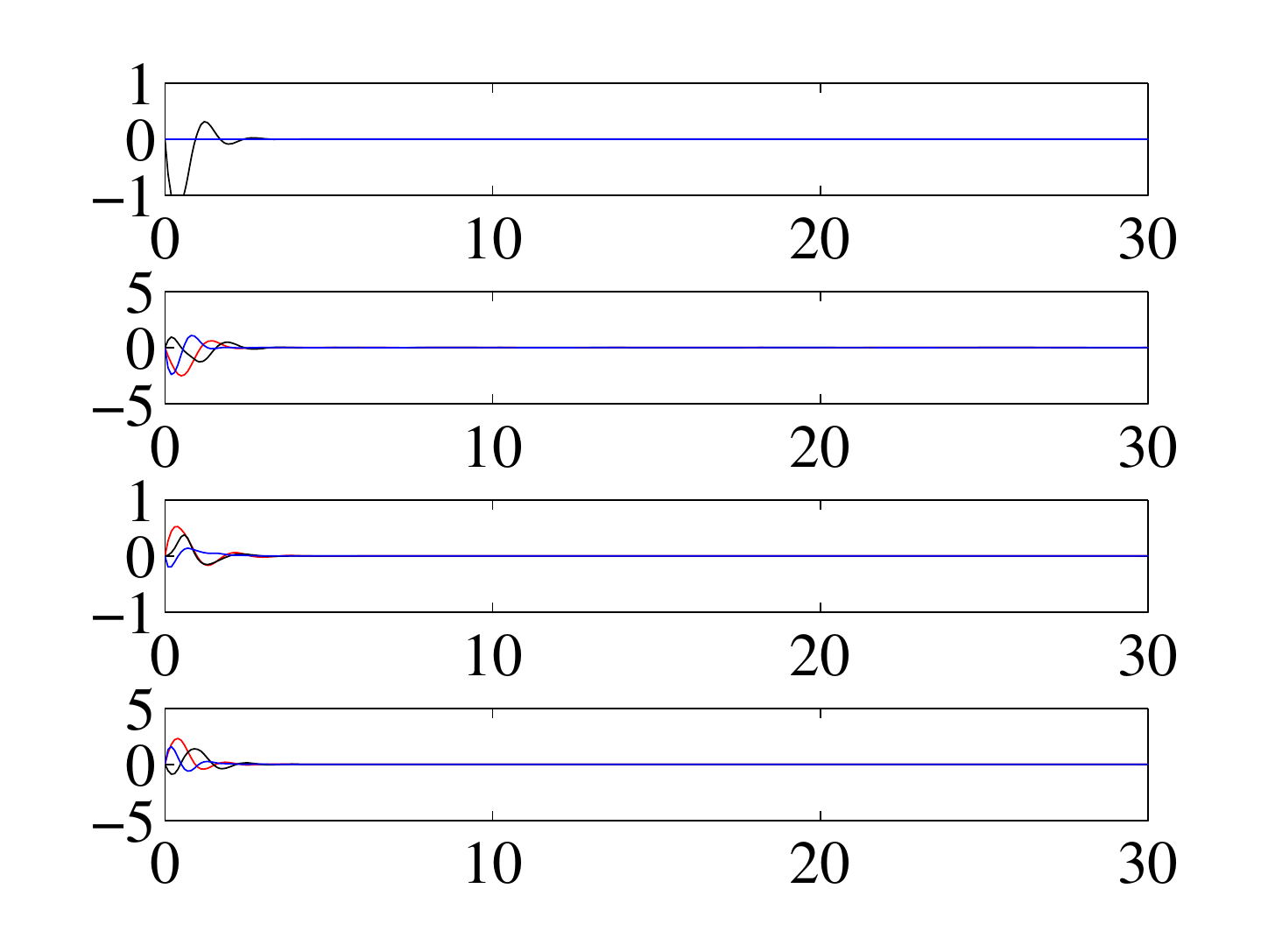}}
}
\centerline{
	\subfigure[Position error vector $e_{x_1},e_{x_{21}},e_{x_{32}},e_{x_{43}}$] {\includegraphics[width=0.47\columnwidth]{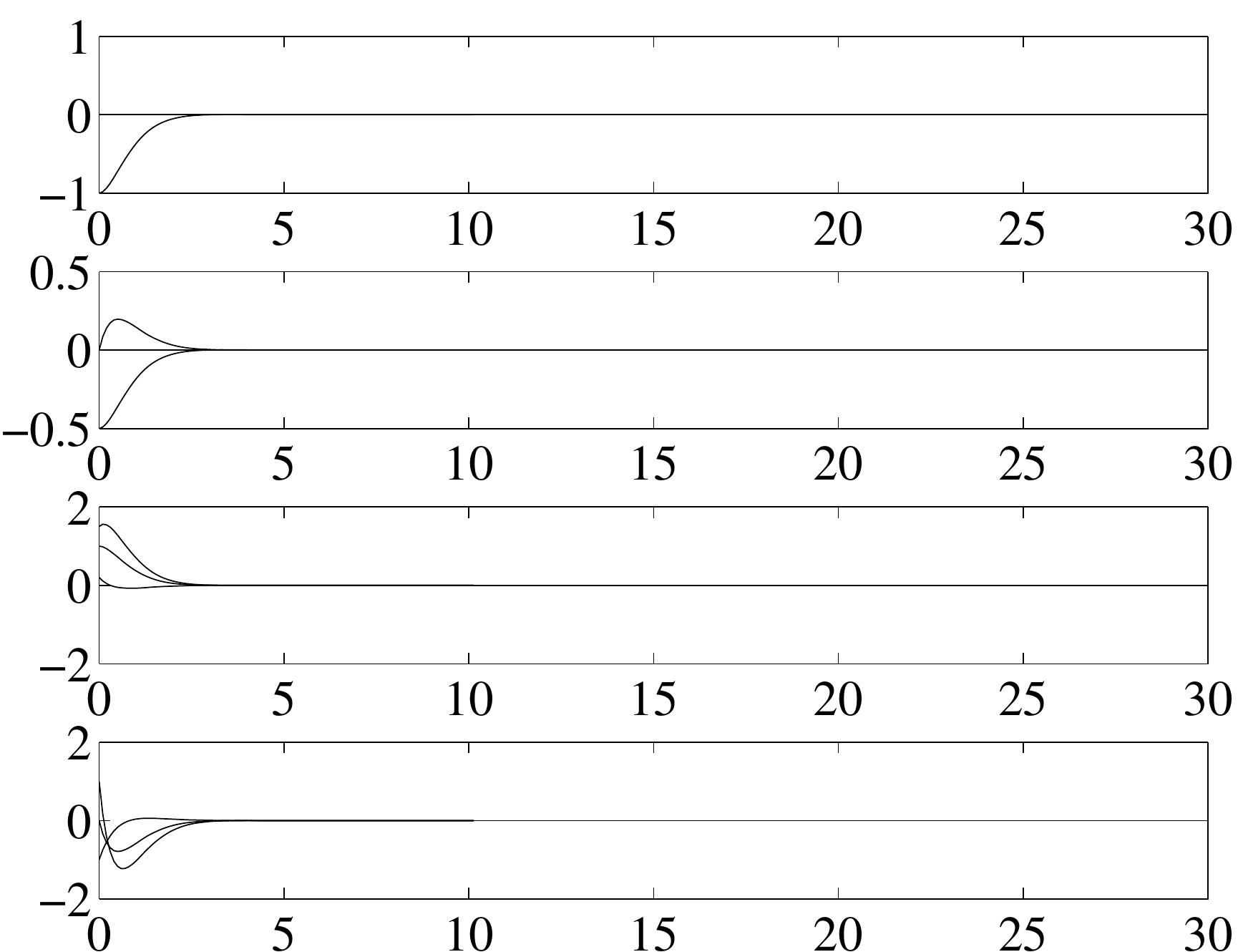}}
	\hspace*{0.03\columnwidth}
	\subfigure[Velocity error vector $e_{v_1},e_{v_{21}},e_{v_{32}},e_{v_{43}}$] {\includegraphics[width=0.47\columnwidth]{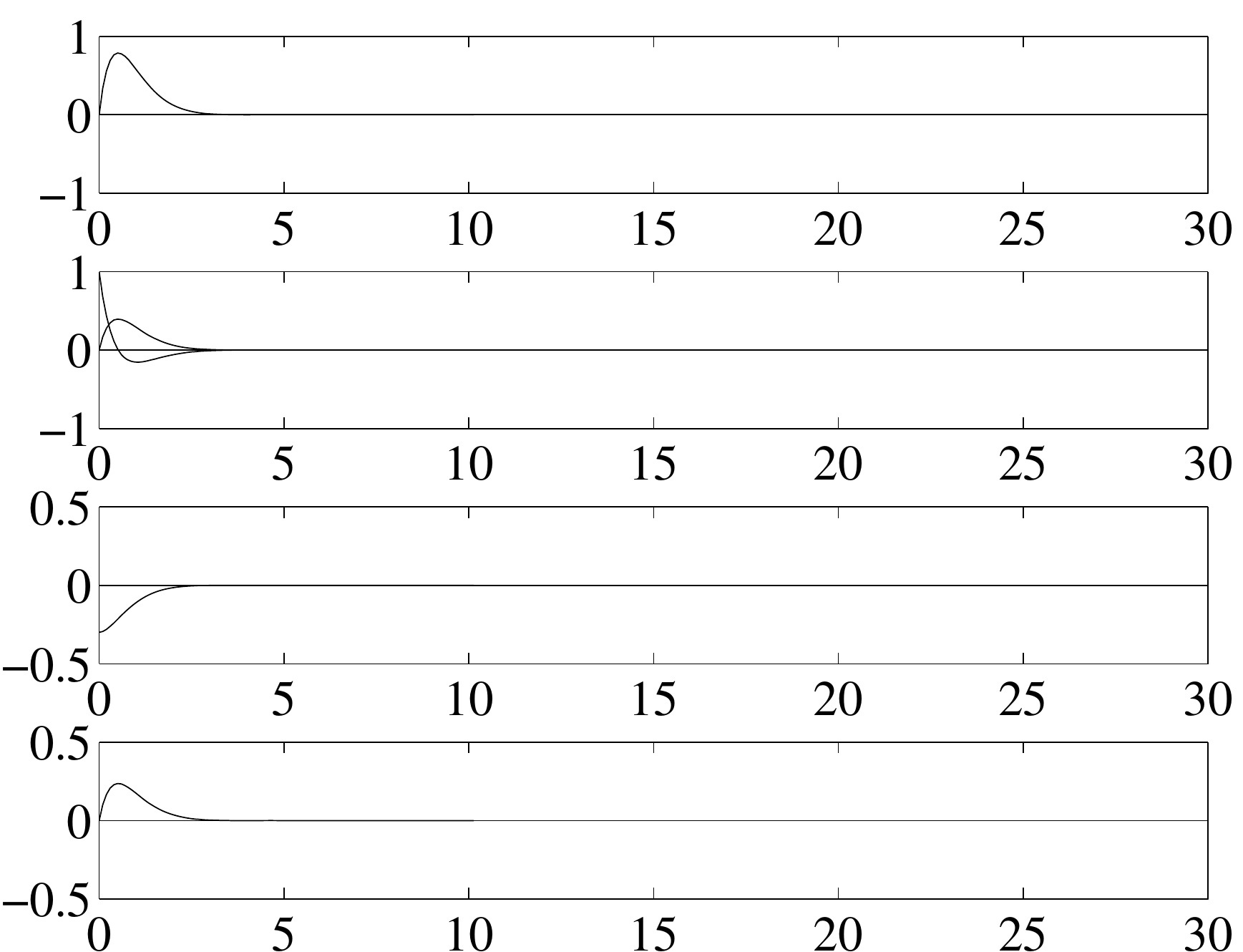}}
}
\centerline{
	\subfigure[Control moment $u_1,u_2, u_3, u_4$ (Nm)] {\includegraphics[width=0.47\columnwidth]{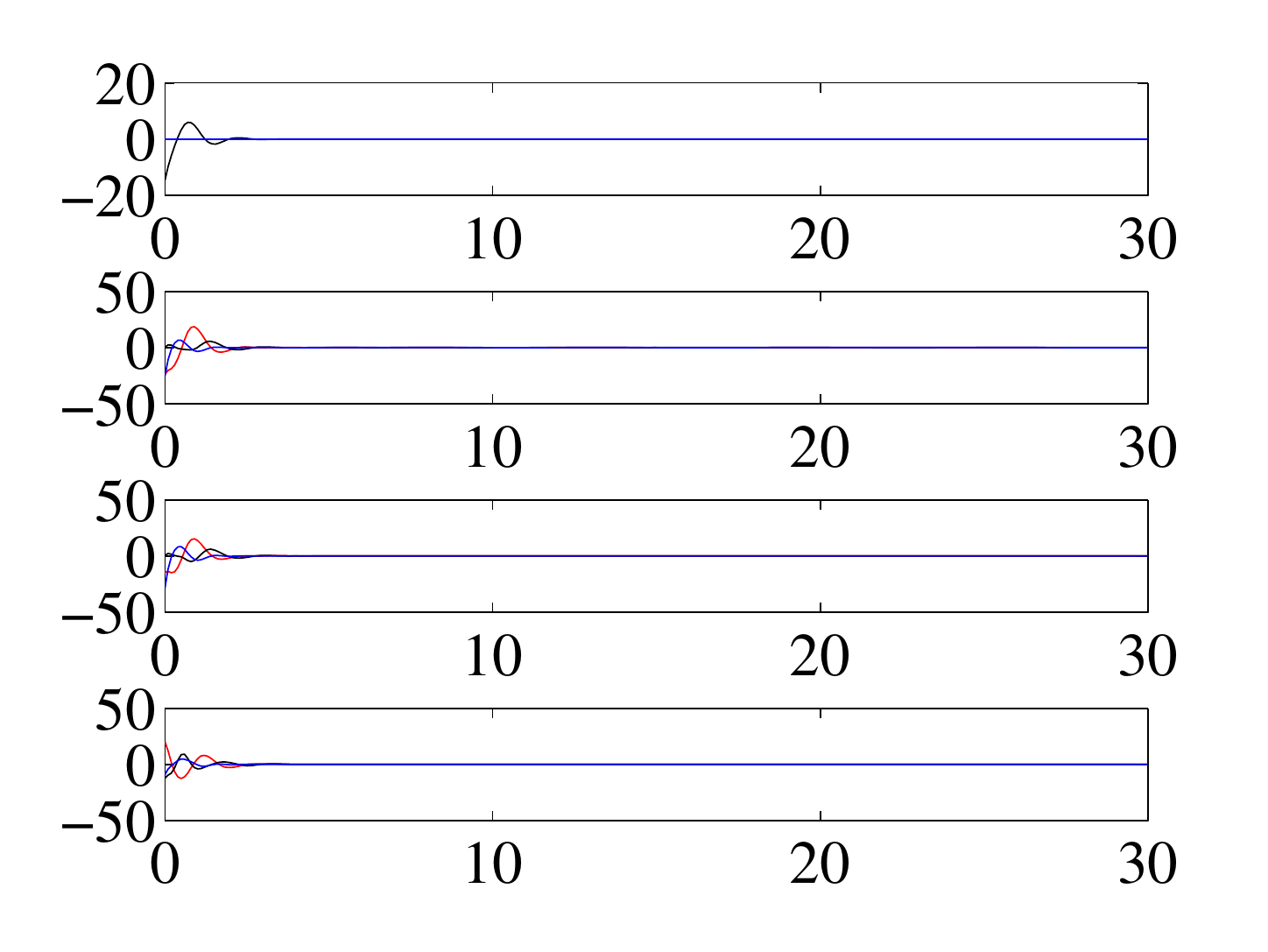}}
	\hspace*{0.03\columnwidth}
	\subfigure[Control force $f_1,f_2, f_3, f_4$ (N)] {\includegraphics[width=0.47\columnwidth]{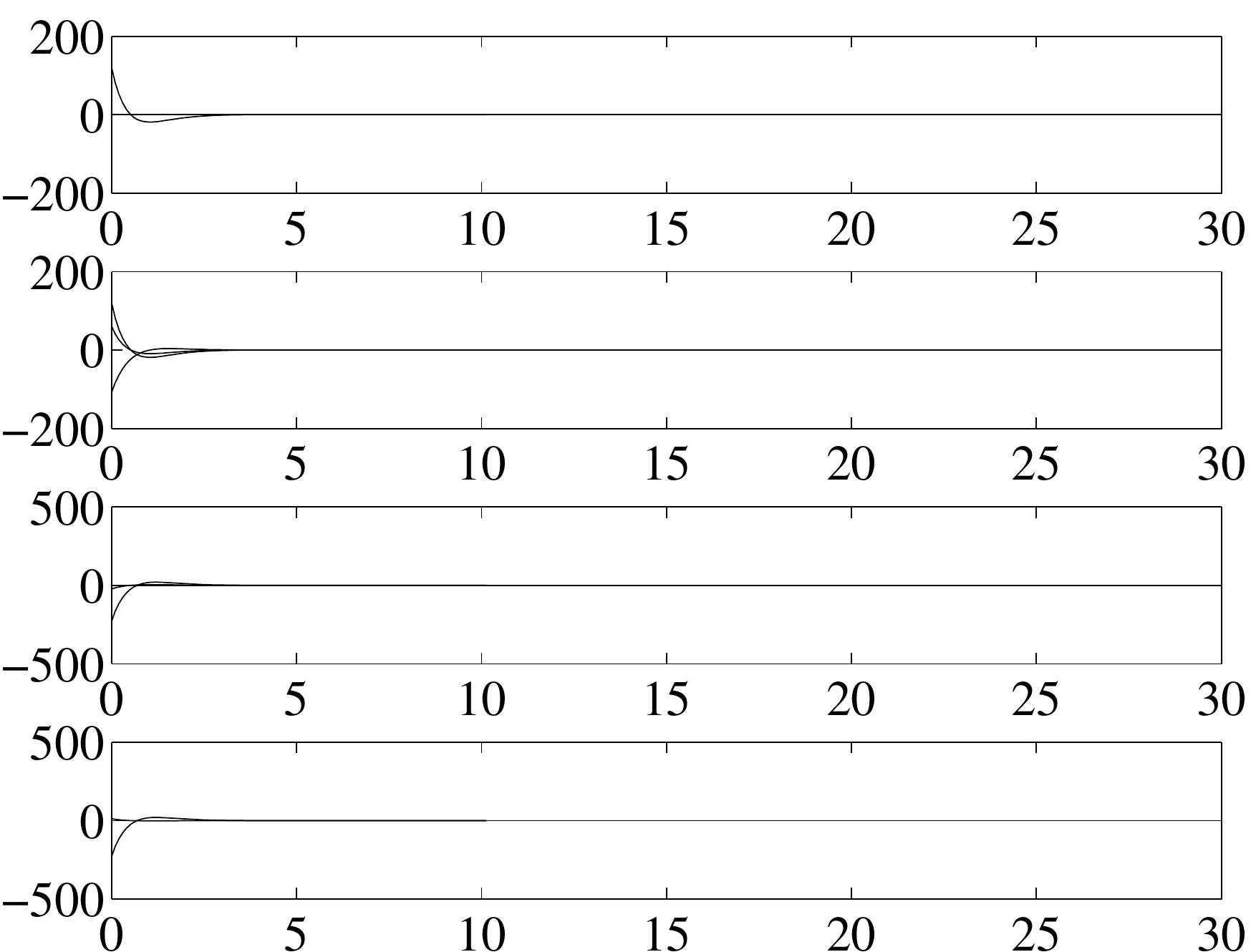}}
}
\caption{Numerical results for four spacecraft in formation.}\label{fig:err2}
\vspace*{-0.1cm} 
\end{figure}

\appendix
\subsection{\tcb{Lemmas}}
\begin{lem}\label{lem:cite1}
The properties stated in Propositon 1 of~\cite{Lee11} are summarized and extended as follows: For non-negative constants $f_1,f_2,f_3$, let $F=\text{diag}[f_1,f_2,f_3]\in\Re^{3\times 3}$, and let $P\in\SO$. Define
	\begin{gather} 
	\Phi=\frac{1}{2}\tr[F(\I-P)], \label{eq:Phi}\\
	e_P=\frac{1}{2}(FP-P^{\T}F)^\vee.\label{eq:eP}
	\end{gather}
Then, $\Phi$ is bounded by the square of the norm of $e_P$ as
	\begin{gather}
	\frac{h_1}{h_2+h_3}\|e_P\|^2\leq \Phi \leq\frac{h_1h_4}{h_5(h_1-\phi)}\|e_P\|^2. \label{eq:PHI}
	\end{gather}
If $\Phi<\phi<h_1$ for a constant $\phi$, where $h_i$ are given by
	\begin{align*} 
	h_1 &= \text{min}\{f_1+f_2,~ f_2+f_3,~ f_3+f_1\}, \\
	h_2 &= \text{max}\{(f_1-f_2)^2,~ (f_2-f_3)^2,~ (f_3-f_1)^2\}, \\
	h_3 &= \text{max}\{(f_1+f_2)^2,~ (f_2+f_3)^2,~ (f_3+f_1)^2\}, \\
	h_4 &= \text{max}\{f_1+f_2,~ f_2+f_3,~ f_3+f_1\}, \\
	h_5 &= \text{min}\{(f_1+f_2)^2,~(f_2+f_3)^2,~(f_3+f_1)^2\}.
	\end{align*}
\end{lem}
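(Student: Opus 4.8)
The plan is to parameterize $P\in\SO$ by its axis--angle (Rodrigues) form $P=\I+\sin\theta\,\hat n+(1-\cos\theta)\hat n^2$, with unit axis $n\in\Sph^2$ and angle $\theta$, and to reduce both $\Phi$ and $e_P$ to closed expressions in $(\theta,n)$; a Rayleigh--quotient structure then makes the two-sided bound transparent. Throughout I abbreviate $G=\tr[F]\I-F=\diag[f_2+f_3,\,f_3+f_1,\,f_1+f_2]$, whose diagonal entries $g_i$ are exactly the pairwise sums entering the $h_i$, so that $h_1=\min_i g_i$, $h_4=\max_i g_i$, $h_3=h_4^2$, and $h_5=h_1^2$.

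First I would compute $\Phi$. Using $\tr[F\hat n]=0$ and $\tr[F\hat n^2]=n^\T Fn-\tr[F]$, the Rodrigues form gives $\tr[F(\I-P)]=(1-\cos\theta)(\tr[F]-n^\T Fn)=(1-\cos\theta)\,n^\T Gn$, hence $\Phi=\tfrac12(1-\cos\theta)\,n^\T Gn$. Next I would compute $e_P$ from $FP-P^\T F$: collecting the skew-symmetric parts, \refeqn{xAAx} gives $F\hat n+\hat n F=(\{\tr[F]\I-F\}n)^\wedge=(Gn)^\wedge$, while \refeqn{hatxy} gives $F\hat n^2-\hat n^2 F=(n\t Fn)^\wedge$, so that $e_P=\tfrac12[\sin\theta\,Gn+(1-\cos\theta)(n\t Fn)]$. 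The unifying step is $F=\tr[F]\I-G$, whence $n\t Fn=-(n\t Gn)$ and everything is written through $G$ alone: $e_P=\tfrac12[\sin\theta\,Gn-(1-\cos\theta)(n\t Gn)]$.

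Since $n\t Gn\perp Gn$, the two summands of $e_P$ are orthogonal, giving $\|e_P\|^2=\tfrac14[\sin^2\theta\,\|Gn\|^2+(1-\cos\theta)^2(\|Gn\|^2-(n^\T Gn)^2)]$ after $\|n\t Gn\|^2=\|Gn\|^2-(n^\T Gn)^2$. Dividing by $\Phi$ and substituting $\sin^2\theta=(1-\cos\theta)(1+\cos\theta)$, the $(1-\cos\theta)$ factors cancel and the ratio collapses to
\[ \frac{\|e_P\|^2}{\Phi}=\frac{n^\T G^2 n}{n^\T Gn}-\Phi. \]
The first term is a weighted average of the eigenvalues $g_i$ of $G$ with weights $g_in_i^2\ge0$, hence lies in $[h_1,h_4]$, while $0\le\Phi<\phi<h_1$. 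Therefore $h_1-\phi<\|e_P\|^2/\Phi\le h_4$, i.e. $\tfrac1{h_4}\|e_P\|^2\le\Phi<\tfrac1{h_1-\phi}\|e_P\|^2$. The stated constants then follow a fortiori, since $h_2+h_3\ge h_4^2\ge h_1h_4$ yields $\tfrac{1}{h_4}\ge\tfrac{h_1}{h_2+h_3}$, and $h_1h_4/h_5=h_4/h_1\ge1$ yields $\tfrac1{h_1-\phi}\le\tfrac{h_1h_4}{h_5(h_1-\phi)}$.

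I expect the main obstacle to be twofold. The routine-but-delicate part is the skew-symmetric bookkeeping for $e_P$, where the cancellation $n\t Fn=-(n\t Gn)$ is exactly what lets the two terms share the common matrix $G$ and makes the orthogonal split available; without it the Rayleigh-quotient collapse stays hidden. The conceptual point to watch is non-degeneracy: the hypothesis $\phi<h_1$ forces $h_1>0$, i.e. at most one $f_i$ vanishes, so that $G$ is positive definite, $n^\T Gn>0$, and the quotient is well defined; the boundary case $P=\I$ (i.e. $\theta=0$) makes both $\Phi$ and $e_P$ vanish and the inequalities hold trivially.
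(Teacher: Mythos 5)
Your proof is correct, and it takes a genuinely different route from the paper's. The paper's proof simply imports Proposition 1 of~\cite{Lee11}: it writes $P=\exp(\hat y)$ and quotes MATLAB-symbolic closed-form expansions of $\|e_P\|^2$ and $\Phi$ as sums over cyclic index triples, then bounds the two ratios $\|e_P\|^2/\Phi$ and $\Phi/\|e_P\|^2$ term by term against the constants $h_1,\dots,h_5$; the argument is explicitly flagged as ``purely the citation from~\cite{Lee11}.'' You instead give a self-contained, coordinate-free derivation: introducing $G=\tr[F]\I-F$, using the hat-map identities \refeqn{xAAx} and \refeqn{hatxy} to get $e_P=\tfrac12[\sin\theta\,Gn-(1-\cos\theta)(n\times Gn)]$, exploiting the orthogonality of the two summands, and collapsing the quotient to the exact identity $\|e_P\|^2/\Phi=\frac{n^\T G^2n}{n^\T Gn}-\Phi$, which a Rayleigh-quotient (weighted-average) argument pins between $h_1-\phi$ and $h_4$. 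Your component formulas agree with the cited ones (e.g.\ $\|n\times Gn\|^2=\sum_{i<j}(g_i-g_j)^2n_i^2n_j^2$ reproduces the $(f_i-f_j)^2y_i^2y_j^2$ sum), so the two proofs rest on the same parameterization, but yours replaces the black-box symbolic computation with hand verification and actually yields the strictly tighter constants $\tfrac1{h_4}\|e_P\|^2\le\Phi\le\tfrac1{h_1-\phi}\|e_P\|^2$, from which the stated bounds follow a fortiori via $h_2+h_3\ge h_4^2\ge h_1h_4$ and $h_5=h_1^2$; you also treat the degenerate cases ($h_1>0$ forced by $\Phi<\phi<h_1$, and $P=\I$) that the citation leaves implicit. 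What the paper's route buys is brevity and consistency with the reference whose constants it reuses verbatim; what yours buys is a verifiable, sharper, and more transparent argument.
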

	\begin{proof}
		From Rodrigues' formula, we have $P=\exp(\hat{y})$, for $y\in\Re^3$. According to \cite{Lee11}, using the MATLAB symbolic computation tool, we find   
			\begin{align} 
			\|e_P\|^2 
			&=\frac{(1-\cos\|y\|)^2}{4\|y\|^4}\sum_{(i,j,k)\in\mathfrak{C}}(f_i-f_j)^2y_i^2y_j^2 \no\\ 
			&\quad +\frac{\sin^2\|y\|}{4\|y\|^2}\sum_{(i,j,k)\in\mathfrak{C}}(f_i+f_j)^2y_k^2, \label{eq:eR}\\
			\Phi &=\frac{1-\cos\|y\|}{2\|y\|^2} \sum_{(i,j,k)\in\mathfrak{C}}(f_i+f_j)y_k^2.\label{eq:PHI1}
			\end{align}	
		Using these two equations it follows that 
			\begin{gather*} 
	        \frac{\|e_P\|^2}{\Phi}\leq\frac{h_2+h_3}{h_1},\quad\frac{\Phi}{\|e_P\|^2} \leq\frac{h_1h_4}{h_5(h_1-\phi)},
			\end{gather*}
		which shows \refeqn{PHI}. Lemma \ref{lem:cite1} is purely the citation from~\cite{Lee11}.
	\end{proof}
	
\begin{lem}\label{lem:cite2}		
Consider 
	\begin{gather} 
	\Phi'=\frac{1}{2}\tr[F'(\I-P')], \label{eqn:Phip} \\
	e_P'=\frac{1}{2}(F'P'-P'^\T F')^\vee, \label{eqn:eP'}
	\end{gather} 
where $F'=\diag[f_1',\,f_2',\,f_3']\in\Re^{3\times 3}$, for positive scalars $f_i'$, $i=\{1,2,3\}$, and $P'=A^\T P A$, for any rotation matrix $A\in\SO$. We say that $\Phi'$ and $e_P'$ are bounded respectively in the form of 
	\begin{enumerate}[(i)]
	\item $\frac{h_1}{h_2'+h_3'}\|e_P\|^2\leq \Phi' \leq\frac{h_1h_4'}{h_5(h_1-\phi)}\|e_P\|^2$, 
	\item $\|e_{P'}\|\leq \sqrt{\frac{2h_2'+h_5'}{h_5}}\|e_P\|$,
	\end{enumerate}
where $h_i$ are given by
		\begin{align*} 
		h_2' &= \text{max}\{(f_1'-f_2')^2,~ (f_2'-f_3')^2,~ (f_3'-f_1')^2\}, \\
		h_3' &= \text{max}\{(f_1'+f_2')^2,~ (f_2'+f_3')^2,~ (f_3'+f_1')^2\}, \\
		h_4' &= \text{max}\{f_1'+f_2',~ f_2'+f_3',~ f_3'+f_1'\}, \\
		h_5' &= \text{min}\{(f_1'+f_2')^2,~(f_2'+f_3')^2,~(f_3'+f_1')^2\}.
		\end{align*}		
\end{lem}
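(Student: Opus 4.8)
The plan is to reduce everything to the closed-form expressions \refeqn{eR} and \refeqn{PHI1} obtained in the proof of Lemma \ref{lem:cite1}, exploiting that $P'=A^\T P A$ is a \emph{conjugate} of $P$ and hence shares its rotation angle. Writing $P=\exp(\hat y)$ via Rodrigues' formula, the exponential identity \refeqn{exp} gives $P'=A^\T\exp(\hat y)A=\exp(\widehat{A^\T y})$, so $P'=\exp(\hat{y}')$ with $y'=A^\T y$ and $\|y'\|=\|y\|\triangleq\theta$. Consequently the trigonometric prefactors $\frac{(1-\cos\theta)^2}{4\theta^4}$ and $\frac{\sin^2\theta}{4\theta^2}$ appearing in \refeqn{eR}--\refeqn{PHI1} are \emph{identical} for the primed and unprimed objects; only the diagonal weights ($f_i'$ versus $f_i$) and the axis components ($y'$ versus $y$) differ. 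First I would record, for $P'$,
\[
\|e_{P'}\|^2=\frac{(1-\cos\theta)^2}{4\theta^4}\sum_{(i,j,k)\in\mathfrak C}(f_i'-f_j')^2 (y_i')^2 (y_j')^2+\frac{\sin^2\theta}{4\theta^2}\sum_{(i,j,k)\in\mathfrak C}(f_i'+f_j')^2 (y_k')^2,
\]
together with the analogous expression for $\Phi'$, read off from \refeqn{eR}--\refeqn{PHI1} under $f_i\mapsto f_i'$, $y\mapsto y'$.

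For part (ii) I would bound the two norm expressions against their shared $\theta$-factors. Upper-bounding $\|e_{P'}\|^2$ by replacing each $(f_i'-f_j')^2$ by $h_2'$ and each $(f_i'+f_j')^2$ by its extremal weight, and using $\sum_k (y_k')^2=\theta^2$ together with the elementary estimate $\sum_{i<j}(y_i')^2(y_j')^2\le\tfrac12\theta^4$, collapses $\|e_{P'}\|^2$ to a single multiple of the common factor $\tfrac{\sin^2\theta}{4}$. Dropping the nonnegative difference term in $\|e_P\|^2$ and applying the min-bound $\sum(f_i+f_j)^2 y_k^2\ge h_5\theta^2$ yields $\|e_P\|^2\ge\frac{\sin^2\theta}{4}h_5$, which supplies the denominator $h_5$. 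Forming the quotient so that $\tfrac{\sin^2\theta}{4}$ cancels gives $\|e_{P'}\|\le\sqrt{(2h_2'+h_5')/h_5}\,\|e_P\|$. Part (i) is then the trace counterpart of the same computation: I would apply Lemma \ref{lem:cite1} verbatim to the conjugated pair $(F',P')$ to obtain $\Phi'$ sandwiched between multiples of $\|e_{P'}\|^2$, and then substitute the norm comparison just derived to re-express the bounds in terms of the unprimed reference $\|e_P\|$ and the normalizing constants $h_1,h_5$ of Lemma \ref{lem:cite1}.

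The main obstacle I anticipate is the bookkeeping once the rotation axis is rotated from $y$ to $y'=A^\T y$: because the weighted sums defining $\|e_{P'}\|$ and $\|e_P\|$ are evaluated on \emph{different} axes with \emph{different} weights, they cannot be cancelled termwise, and one must instead route every estimate through the uniform extremal (min/max over index pairs) constants. This is precisely where the exact stated constants $2h_2'+h_5'$ in (ii) and the mixed primed/unprimed normalizers in (i) must be pinned down, and where care is needed to keep the lower bound on $\|e_P\|^2$ and the upper bound on $\|e_{P'}\|^2$ aligned to the same $\theta$-factor. A secondary technical point is that the upper bounds implicitly require the rotation angle $\theta$ to remain bounded away from $\pi$; this is guaranteed on the sublevel set $\Phi'<\phi<h_1$ underlying the hypothesis, exactly as in Lemma \ref{lem:cite1}.
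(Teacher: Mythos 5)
Your opening move is exactly the paper's: write $P=\exp(\hat y)$, use \refeqn{exp} to get $P'=\exp(\widehat{A^\T y})$ so that the primed and unprimed objects share the same rotation angle $\theta=\|y\|$, and then push everything through the closed forms \refeqn{eR}--\refeqn{PHI1} with termwise max/min constants; your part (ii) is essentially the paper's own ratio computation. The genuine gap is in your route to part (i). You propose to apply Lemma \ref{lem:cite1} verbatim to the pair $(F',P')$, obtaining $\tfrac{h_1'}{h_2'+h_3'}\|e_{P'}\|^2\le\Phi'\le\tfrac{h_1'h_4'}{h_5'(h_1'-\phi')}\|e_{P'}\|^2$, and then substitute the norm comparison (ii). But (ii) is one-sided: it only says $\|e_{P'}\|\le\sqrt{(2h_2'+h_5')/h_5}\,\|e_P\|$. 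That converts the \emph{upper} half of the sandwich, but it makes the \emph{lower} half vacuous --- to conclude $\tfrac{h_1}{h_2'+h_3'}\|e_P\|^2\le\Phi'$ you would need a reverse estimate $\|e_{P'}\|\ge c\,\|e_P\|$, which you never derive and which (ii) cannot supply. The paper avoids this entirely: it never passes through $\|e_{P'}\|$ in part (i), but instead writes the closed form \refeqn{PHI2} for $\Phi'$ (primed weights, rotated axis, same angle) and compares it \emph{directly} against the closed form \refeqn{eR} for $\|e_P\|^2$. That direct comparison is also what produces the stated mixture of constants (unprimed $h_1,h_5,\phi$ alongside primed $h_2',h_3',h_4'$); your chain would instead leave $h_1',h_5',\phi'$ in the upper constant.

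A second, smaller point: your claim in (ii) that the difference term ``collapses to a single multiple of $\tfrac{\sin^2\theta}{4}$'' hides the factor $(1-\cos\theta)^2/\sin^2\theta=\tan^2(\theta/2)$, which is unbounded as $\theta\to\pi$. The restriction is not merely technical: at $\theta=\pi$ with $y=\pi e_3$ one gets $e_P=0$, while $y'=A^\T y=\tfrac{\pi}{\sqrt2}(e_1+e_2)$ gives $\|e_{P'}\|^2=\tfrac14(f_1'-f_2')^2\neq0$, so (ii) is simply false there. You do flag this and invoke the sublevel set $\Phi<\phi<h_1$, which is the right repair; but note that it yields a $\phi$-dependent coefficient on $h_2'$ (roughly $\tan^2(\theta/2)\le\phi/(h_1-\phi)$), not the clean factor $2$, and that upper-bounding the sum term requires the maximum $h_3'$, not the minimum $h_5'$ appearing in the statement. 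In fairness, the paper's own proof of (ii) has the same soft spots (its prefactor $1+\cos\|x\|$ does not follow from the algebra), so on this point your proposal is faithful to the paper, warts included; the place where your plan genuinely cannot be completed as written is the lower bound of part (i).
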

	\begin{proof}
	 We start from finding the boundedness of $\Phi'$, employing \refeqn{exp} leads to
		\begin{gather*} 
		P'={A^\T}P\,A ={A^\T}\exp(\hat{y})\,A =\exp(\widehat{A^\T y}).
		\end{gather*}
	From Rodrigues' formula, we know $P'=\exp(\hat{y}')$ for $y'\in\Re^3$. Evidently, we have 
		\begin{gather*} 
		y'=A^\T y, \quad \|y'\|=\|y\|.
		\end{gather*}
	We therefore write
		\begin{align} 
		\Phi'
		&= \frac{(1-\cos\|y'\|)}{2\|y'\|^2}\sum_{(i,j,k)\in\mathfrak{C}}(f_i'+f_j')y_k'^2  \no\\
		&= \frac{(1-\cos\|y\|)}{2\|y\|^2}\sum_{(i,j,k)\in\mathfrak{C}}(f_i'+f_j')y_k^2. \label{eq:PHI2}
		\end{align}
	Comparing \refeqn{eR} with \refeqn{PHI2} yields 
		\begin{gather*} 
        \frac{\|e_P\|^2}{\Phi}\leq\frac{h_2'+h_3'}{h_1},\quad\frac{\Phi}{\|e_P\|^2} \leq\frac{h_1h_4'}{h_5(h_1-\phi)},
		\end{gather*}
	which shows (i). 
	
	Similar to \refeqn{eR}, we can have 
	\begin{align*} 
	\|e_{P'}\|^2 
	&=\frac{(1-\cos\|x'\|)^2}{4\|x'\|^4}\sum_{(i,j,k)\in\mathfrak{C}}(f'_i-f'_j)^2 {x'_i}^2{x'_j}^2 \\ 
	&\quad +\frac{\sin^2\|x'\|}{4\|x'\|^2}\sum_{(i,j,k)\in\mathfrak{C}}(f'_i+f_j')^2{x'_k}^2. 
	\end{align*}
	By comparing  the magnitude $e_P$ and $e_P'$	
	\begin{align*}
	\frac{\|e_{\bar{P}}\|^2}{\|e_P\|^2}
	&\leq \frac{1+\cos\|x\|}{\|x\|^2}\frac{\max\{(\bar{f}_i-\bar{f}_j)^2\}\bar{x}_i^2\bar{x}_j^2}{\min\{(f_i+f_j)^2\}x_k^2} \\
	&\quad +\frac{\max\{(\bar{f}_i+\bar{f}_j)^2\}\|\bar{x}\|^2} {\min\{(f_i+f_j)^2\}\|x\|^2} 
	= (1+\cos\|x\|)\frac{\bar{h}_2}{h_5} +\frac{\bar{h}_5}{h_5} \\
	&\leq \frac{2\bar{h}_2+\bar{h}_5}{h_5},
	\end{align*}
which leads to (ii). It shows that $P'=A^\T P A$, for any rotation matrix $A\in\SO$, we can still bound the new $\Phi'$ and $e_P'$ in terms of original $\|e_P\|$. 
	\end{proof}

\subsection{\tcb{Proof of Proposition \ref{prop:Att_Err}}}\label{subsec:pAtt}
Property (i), (ii) has been verified in~\cite{Wu2012}. Also, it has been shown that 
	\begin{align} 
	K_1=U_1G_1U_1^\T,
	\end{align}
where $U_1\in\SO$ and $G_1=\diag[k_{b_A}, k_{b_B}, 0]$ such that
	\begin{align} 
	\tr[K_1]=\tr[G_1]=k_{b_A}+k_{b_B}\triangleq \bar{k}_b. \label{eq:trace}
	\end{align}

From \refeqn{error3}, we can tell that the magnitude of $e_{b_i}$ is bounded, since $b_i$ and $b_{i_d}$ are unit vectors, for $i=\{A,B\}$. That is, $\|e_{b_i}\|\leq1$, which leads to 
	\begin{align} 
	\|e_b\|\leq k_{b_A}\|e_{b_A}\|+k_{b_B}\|e_{b_B}\|\leq k_{b_A}+k_{b_B} =\bar{k}_b.
	\end{align}	
which shows (iii).

From \refeqn{Psi}, Differentiating $\Psi_1$ gives us	
	\begin{align} 
	\dot{\Psi}_1=-\tr[K_1\dot{R}_1^e]+ \tr[\dot{K}_1(\I-R_1^e)]. \label{eq:dPsi}
	\end{align}	
From \refeqn{Rdot} and \refeqn{Tra1}, the first term at right hand side can be written as	
\begin{align*} 
	-\tr[K_1\dot{R}_1^e] 
	&=-\tr[\hat{\W}_1R_1^d{^\T}KR_1] +\tr[\hat{\W}_1^dR_1^d{^\T}KR_1].
	\end{align*}
Applying one of the hat map properties \refeqn{hatxA} leads us to	
	\begin{align} 
	-\tr[K_1\dot{R}_1^e] 
	&=\W_1^\T(R_1^d{^\T}K_1R_1-{R_1^\T}K_1R_1^d)^\vee  \no\\
	&\quad -\W_1^d{^\T}(R_1^d{^\T}K_1R_1-{R_1^\T}K_1R_1^d)^\vee \no\\
	&=(\W_1-\W_1^d)^\T e_b =e_{\W_1}^\T e_b. \label{eq:dPsi1}
	\end{align}	
Next, we can write
	\begin{align}
	\hspace*{-0.1cm}\tr[\dot{K}_1(\I-R_1^e)] 
	&=\tr[\tU_1\tG_1\tU_1^\T(I-R_1^e)] \no\\
	&=\tr[\tG_1]-\tr[\tG_1\tU_1^\T R_1^e\tU_1] \no\\
	&=\tr\big[\tG_1(\I-\tU_1^\T R_1^e\tU_1)\big] \triangleq\tilde{\Phi}_1,
	\end{align}
where $\dot{K}_1\triangleq\tU_1\tG_1\tU_1^\T$ is symmetric. We have $\tU_1\in\SO$ and $\tG_1=\diag[\tg_1,\tg_2,\tg_3]\in\Re^{3\t3}$ where $\tg_i$ are positive constants. Then, Property (ii) of Lemma \ref{lem:cite2} is applied here. That is, we let $F'=2\tG_1$ and $P'=\tU_1^\T R_1^e\tU_1$ to obtain
	\begin{gather}
	\frac{h_1}{\tih_2+\tih_3}\|e_b\|^2\leq \tilde\Phi_1 \leq\frac{h_1\tih_4}{h_5(h_1-\tilde\phi_1)}\|e_b\|^2. 
	\label{eq:dPsi2}
	\end{gather}
If $\tilde\Phi_1<\tilde\phi_1<\tih_1$ for a constant $\phi_1$, where $\tih_i$ are given by
	\begin{align*} 
	\tih_2 &= 4\text{max}\{(\tg_1-\tg_2)^2,~ (\tg_2-\tg_3)^2,~ (\tg_3-\tg_1)^2\}, \\
	\tih_3 &= 4\text{max}\{(\tg_1+\tg_2)^2,~ (\tg_2+\tg_3)^2,~ (\tg_3+\tg_1)^2\}, \\
	\tih_4 &= 2\text{max}\{\tg_1+\tg_2,~ \tg_2+\tg_3,~ \tg_3+\tg_1\}. 
	\end{align*}
Summation of \refeqn{dPsi1} and \refeqn{dPsi2} allows us to write 	
		\begin{align*} 
		\dot{\Psi}_1\leq e_{\W_1}^\T e_b + \Gamma_1\|e_b\|,
		\end{align*}
where $\Gamma_1=\frac{h_1\tih_4}{h_5(h_1-\tilde\phi_1)}$. This shows (iv).

Differentiating \refeqn{eb},  we are able to write  
	\begin{align*} 
	\dot{\hat{e}}_b &= \hat{e}_b^\zeta+ \hat{e}_b^\xi, \\
	\hat{e}_b^\zeta &=\dot{R}_1^d{^\T}K_1R_1 +R_1^d{^\T}K_1\dot{R}_1 -\dot{R}_1^\T K_1R_1^d -R_1^\T K_1\dot{R}_1^d, \\
	\hat{e}_b^\xi &=R_1^d{^\T}\dot{K}_1R_1 -R_1^\T\dot{K}_1R_1^d.
	\end{align*} 
Using kinematics equations of the spacecraft, we know that
	\begin{align*} 
	\hat{e}_b^\zeta 
	&= (\hat{\W}_1R_1^\T K_1R_1^d+{R_1^d}^\T K_1R_1\hat{\W}_1)  \\
	&\quad -(\hat{\W}_1^d{R_1^d}^\T K_1R_1 +R_1^\T K_1R_1^d\hat{\W}_1^d).
	\end{align*}
Inserting $\W_1=e_{\W_1}+\W_1^d$ and rearrangement lead us to
	\begin{align} 
	\hat{e}_b^\zeta 
	&= (\hat{e}_{\W_1}{R_1^\T}K_1R_1^d+{R_1^d}^\T K_1R_1\hat{e}_{\W_1}) +\hat{e}_b\hat{\W}_1^d -\hat{\W}_1^d\hat{e}_b.  \label{eq:ez}
	\end{align}
Further, in view of \refeqn{xAAx} and \refeqn{hatxy}, we obtain 
	\begin{align} 
	e_b^\zeta =E_{\W_1} e_{\W_1} +e_b\t\W_1^d,
	\end{align}
where $E_{\W_1}=\{\tr[{R_1^\T}K_1R_1^d]\I-{R_1^\T}K_1R_1^d\}\in\Re^{3\t3}$. In particular, the Frobenius norm of $E_{\W_1}$ is given by
	\begin{align*}
	\|E_{\W_1}\|_F 
	&=\sqrt{\tr[E_{\W_1}^\T E_{\W_1}]}=\sqrt{\tr[{R_1^\T}K_1R_1^d]^2-\tr[K_1^2]}  \\
	&\leq \frac{1}{\sqrt{2}}\tr[G_1] =\frac{1}{\sqrt{2}}\bar{k}_b,
	\end{align*}
where \refeqn{trace} is applied. From \refeqn{Bd}, we can write 
	\begin{align} 
	\|e_b^\zeta\| \leq \frac{1}{\sqrt{2}}\bar{k}_b \|e_{\W_1}\| +\Bd\|e_b\|. \label{eq:ebn}
	\end{align}

As for $\hat{e}_b^\xi$, we know that
	\begin{align*} 
	\hat{e}_b^\xi &=R_1^d{^\T}(\tU_1\tG_1\tU_1^\T)R_1 -R_1^\T(\tU_1\tG_1\tU_1^\T)R_1^d,
	\end{align*}
and this yields
	\begin{align*} 
	\|\hat{e}_b^\xi\| 
	&= \|\tU_1^\T R_1^d(\hat{e}_b^\xi){R_1^d}^\T\tU_1\| \\
	&= \|\tG_1{\tU_1^\T}R_1{R_1^d}^\T\tU_1 -\tU_1^\T R_1^dR_1^\T\tU_1\tG_1\| \\
	&\triangleq \|\tG_1\breve{U}_1 -\breve{U}_1^\T\tG_1\|,
	\end{align*}
where $\breve{U}_1={\tU_1^\T}R_1{R_1^d}^\T\tU_1=\tU_1^\T R_1^e\tU_1$. Notice that we have applied $\|A\|=\|RA\|=\|AR\|$, for any matrix $A\in\Re^{3\t3}$ and $R\in\SO$. Also we know the vee map does not change the magnitude of the vector, which implies 
	\begin{align*} 
	\|\tG_1\breve{U}_1-\breve{U}_1^\T\tG_1\| =\|(\tG_1\breve{U}_1-\breve{U}_1^\T\tG_1)^\vee\| =	\|e_b^\xi\|. 
	\end{align*}
Using property (ii) of Lemma \ref{lem:cite2}, we know that 
	\begin{align} 
	\|\hat{e}_b^\xi\| \leq B_1\|e_b\|, \label{eq:eb2}
	\end{align}
where $B_1=2\sqrt{\frac{2\tilde{h}_2+\tilde{h}_5}{h_5}}$. The summation of \refeqn{ebn} and \refeqn{eb2} results in
	\begin{align} 
	\|\dot{e}_b\| \leq \frac{1}{\sqrt{2}}\bar{k}_b \|e_{\W_1}\| +(\Bd+B_1)\|e_b\|,
	\end{align}			
which shows (v).	
	
\subsection{\tcb{Proof of Proposition \ref{prop:Rel_Err}}}\label{subsec:pRel}
The configuration error function \refeqn{Psi21} can be written as
	\begin{align*} 
	\Psi_{21}  
	&= k_{21}^\al[1+b_{12}\cdot(Q_{21}^d b_{21})] +k_{21}^\be[1+b_{123}\cdot(Q_{21}^d b_{213})] \\
	&= k_{21}^\al +k_{21}^\be +\tr[k_{21}^\al Q_{21}^d b_{21}b_{12}^\T +k_{21}^\be Q_{21}^d b_{213}b_{123}^\T] \\ 
	&= k_{21}^\al +k_{21}^\be +\tr[k_{21}^\al Q_{21}^d(R_2^\T s_{21})(s_{12}^\T{R_1})  \\
    &\quad +k_{21}^\be Q_{21}^d(R_2^\T s_{213})(s_{123}^\T{R_1})], 
	\end{align*}    
employing the geometric constraints $s_{12}=-s_{21}$ and $s_{123}=-s_{213}$, we obtain
	\begin{align} 
	\Psi_{21}     
	&= k_{21}^\al +k_{21}^\be 
	-\tr[ Q_{21}^d R_2^\T (k_{21}^\al s_{21}s_{21}^\T +k_{21}^\be s_{213}s_{213}^\T)R_1] \no\\
	&\triangleq k_{21}^\al +k_{21}^\be -\tr[Q_{21}^d R_2^\T K_{21}R_1], \label{eqn:PsiIJ1}
	\end{align}
where we define a symmetric matrix, 
	\begin{align}
	K_{21}=k_{21}^\al s_{21}s_{21}^\T +k_{21}^\be s_{213}s_{213}^\T. \label{eqn:K12}
	\end{align}

According to the spectral theory, the matrix $K_{12}$ can be decomposed into $K_{12}=U_{12}G_{12}U_{12}^T$, where $G_{12}$ is the diagonal matrix given by $G_{12}=\diag[k^\al_{12},\,k^\be_{12},\,0]\in\Re^3$, and $U_{12}$ is an orthonormal matrix defined as $U_{12}=[s_{12}, s_{123}, \frac{s_{12}\t s_{123}}{\|s_{12}\t s_{123}\|} ]\in\SO$. Hence,  the substitution of $k_{12}^\al +k_{12}^\be=\tr[G_{12}]=\tr[K_{12}]$ leads us to 
	\begin{align} 
	\Psi_{21}
	&= \tr[K_{21}(\I-R_1Q_{21}^dR_2^\T)] \label{eq:PsiIJ1}\\
	&= \tr[G_{21}(\I-U_{21}^\T R_1 Q_{21}^d R_2^\T U_{21})], \label{eq:PsiIJ2}
	\end{align}
These are alternative expressions of $\Psi_{21}$. In addition, \refeqn{e21} can be written as  
	\begin{align} 
	\hat{e}_{21}
	&= k_{21}^\al[({Q_{21}^d}^\T b_{12})\t b_{21}]^\wedge +k_{21}^\be[({Q_{21}^d}^\T b_{123})\t b_{213}]^\wedge \no\\
&= k_{21}^\al[b_{21}b_{12}^\T{Q_{21}^d} -{Q_{21}^d}^\T b_{12}b_{21}^\T] \no\\ 
&\quad +k_{21}^\be[b_{213}b_{123}^\T{Q_{21}^d}-{Q_{21}^d}^\T b_{123}b_{213}^\T] \no\\
	&=k_{21}^\al[R_2^\T s_{21}s_{12}^\T R_1{Q_{21}^d} -{Q_{21}^d}^\T{R_1^\T}s_{12} s_{21}^\T R_2] \no\\ 
	&\quad +k_{21}^\be[R_2^\T s_{213}s_{123}^\T{R_1}{Q_{21}^d}-{Q_{21}^d}^\T{R_1^\T}s_{123}s_{213}^\T R_2] \no\\
&= {Q_{21}^d}^\T{R_1^\T}K_{21}R_2 -R_2^\T K_{21}{R_1}{Q_{21}^d}, \label{eq:eij}
	\end{align} 
where one of the hat properties \refeqn{hatxy} is applied.  Together with \refeqn{PsiIJ1},  property (i) is proved.

We apply Lemma \ref{lem:cite1} here, that is, in view of \refeqn{PsiIJ2} and \refeqn{Phi}, let  $F=2G_{12}$ and $P=U_{21}^\T R_1 Q_{21}^d R_2^\T U_{21}$, we obtain $\Psi_{12}=\Phi$ and $\psi=\phi$, which results in
	\begin{gather}
	\frac{h_1}{h_2+h_3}\|e_P\|^2\leq \Psi_{12} \leq\frac{h_1h_4}{h_5(h_1-\phi)}\|e_P\|^2. 
	\label{eqn:c}
	\end{gather}
Furthermore, substituting the new expression of $F$ and $P$ into \refeqn{eP}, we obtain
	\begin{align*}
	\hat e_P 
	&= G_{21}U_{21}^\T R_1{Q_{21}^d} R_2^\T U_{21}- U_{21}^\T R_2{Q_{21}^d}^\T R_1^\T U_{21} G_{21}\\
	&=U_{21}^\T (K_{21} R_1{Q_{21}^d}R_2^\T -  R_1{Q_{21}^d}^\T{R_1^\T}K_{21})U_{21}\\
	&=U_{21}^\T R_2{Q_{21}^d}^\T(\hat{e}_{21}){Q_{21}^d} R_2^T U_{21}. 
	\end{align*}
Employing \refeqn{RxR} yields to
	\begin{align*}
	\hat e_P 
	& =(U_{21}^\T R_2{Q_{21}^d}^\T\hat{e}_{21})^\wedge.
	\end{align*}
This implies $\|e_P\|=\|e_{21}\|$ since $U_{21}$, $R_2$, $Q_{21}^d$  all belongs to $\SO$. This shows (ii).

The time-derivative of \refeqn{PsiIJ1} is given by 
	\begin{align} 
	\dot\Psi_{21}     
	&=\tr[\dot{K}_{21}(\I-{Q_{21}^e})] -\tr[K_{21}\,\dot{Q}_{21}^e], \label{eq:dPsiIJ1}
	\end{align}	
where ${Q_{21}^e}=R_1 Q_{21}^d R_2^\T\in\SO$. Notice that $\dot{K}_{12}$ is symmetric, since
	\begin{align*} 
	\dot{K}_{21}
	&= k_{21}^\al\hat{\mu}_{21}s_{21}s_{21}^\T -k_{21}^{\al}s_{21}s_{21}^\T\hat{\mu}_{21} \\ &\quad +k_{21}^\beta\hat{\mu}_{213}s_{213}s_{213}^\T -k_{21}^{\beta}s_{213}s_{213}^\T\hat{\mu}_{213} =\dot{K}_{21}^\T, 
	\end{align*}
and it can be decomposed to $\dot{K}_{21}=\tU_{21}\tG_{21}\tU_{21}^\T$ where $\tU_{21}\in\SO$ and $\tG_{21}=\diag[\tg_1,\,\tg_2,\,\tg_3]$ for three positive scalars. Hence, the first term on the right of \refeqn{dPsiIJ1} can be written as 
	\begin{align*} 
	\tr[\dot{K}_{21}(\I-{Q_{21}^e})]=\tr[\tG_{21}(\I- \tU_{21}^\T{Q_{21}^e}\tU_{21})] \triangleq \tcb{\tilde{\Phi}_{21}} .
	\end{align*}
	
Here we apply the property (ii) of Lemma. That is, we let $F'=2\tG_{21}$ and $P'=\tU_{21}^\T{Q_{21}^e}\tU_{21}$ to obtain
	\begin{gather}
	\frac{h_1}{\tih_2+\tih_3}\|e_{21}\|^2\leq \tcb{\tilde{\Phi}_{21}} \leq\frac{h_1\tih_4}{h_5(h_1-\psi)}\|e_{21}\|^2. 
	\label{eq:dP1}
\end{gather}
If $\tilde\Phi<\tilde\phi<\tih_1$ for a constant $\phi$, where $\tih_i$ are given by
\begin{align*} 
\tih_2 &= 4\text{max}\{(\tg_1-\tg_2)^2,~ (\tg_2-\tg_3)^2,~ (\tg_3-\tg_1)^2\}, \\
\tih_3 &= 4\text{max}\{(\tg_1+\tg_2)^2,~ (\tg_2+\tg_3)^2,~ (\tg_3+\tg_1)^2\}, \\
\tih_4 &= 2\text{max}\{\tg_1+\tg_2,~ \tg_2+\tg_3,~ \tg_3+\tg_1\}. 
\end{align*}
In addition, to find out the second term on the right of \refeqn{dPsiIJ1}, we first find out the time derivative of $Q_{12}^e$,
	\begin{align*}
	\dot{Q}_{21}^e
	&=\frac{d}{dt}(R_1{Q_{21}^d} R_2^\T) \\
	&= (R_1\hat{\W}_1){Q_{21}^d} R_2^\T +R_1\tcb{({Q_{21}^d}\hat{\W}_2^d -\hat{\W}_1{Q_{21}^d})}R_2^\T \\
	&\quad +R_1{Q_{21}^d}(-\hat\W_2R_2^\T)\\
	&= -R_1Q_{21}^d(\hat{\W}_2-\hat{\W}_2^d)R_2^\T = -R_1Q_{21}^d(\hat{e}_{\W_2})R_2^\T,
	\end{align*}
where \refeqn{Wijd} and \refeqn{RxR} are applied. Then we can write
	\begin{align}
	-\tr[K_{21}\dot{Q}_{21}^e] 
	&= \tr\big[K_{21}R_1Q_{21}^d(\hat{e}_{\W_2})R_2^\T\big]  \no\\
	&= \tr\big[(\hat{e}_{\W_2})R_2^\T K_{21}R_1Q_{21}^d\big]  \no\\
	&= e_{\W_2}^\T({Q_{21}^d}^\T R_1^\T K_{21}R_2-R_2^\T K_{12}R_1{Q_{21}^d})^\vee \no\\
	&= e_{\W_2}^\T e_{21},\label{eq:dP2}
	\end{align}
where  \refeqn{hatxA} and \refeqn{eij} are applied in the process of rearrangement. Next, substituting \refeqn{dP1} and \refeqn{dP2} to \refeqn{dPsiIJ1} results in 
	\begin{align} 
	\dot\Psi_{12}     
	&\leq e_{\W_2}^\T e_{21} +\Gamma_{21}\|e_{21}\|^2, \label{eqn:dPsiIJ2}
	\end{align}	
where $\Gamma_{21}\triangleq \sqrt{\frac{h_1\tih_4}{h_5(h_1-\phi)}}$. This is the proof of (iii).	

Lastly we show (iv). Differentiating \refeqn{eij} with respect to time, we obtain
	\begin{align}
	\dot{\hat{e}}_{21}=\mathfrak{\hat{e}_a}+\mathfrak{\hat{e}_b}, \label{eq:deji}
	\end{align}
where 
	\begin{align*}
	\mathfrak{\hat{e}_a} 
	&=\dot{Q}^d_{21}{}^\T R_1^\T K_{21}R_2 +{Q_{21}^d}^\T\dot{R}_1^\T K_{21}R_2 +{Q_{21}^d}^\T R_1^\T K_{21}\dot{R}_2 \\ 
	&\quad -\dot{R}_2^\T K_{21}R_1Q_{21}^d -R_2^\T K_{21}\dot{R}_1Q_{21}^d -R_2^\T K_{21}R_1\dot{Q}^d_{21}, \\
	\mathfrak{\hat{e}_b} 
	&={Q_{21}^d}^\T{R_1^\T}\dot{K}_{21}R_2 -R_2^\T\dot{K}_{21}{R_1}Q_{21}^d.
	\end{align*}	
Employing the kinematic equations \refeqn{Rdot}, \refeqn{Wijd} and one of the hat map properties \refeqn{RxR} into $\mathfrak{\hat{e}_a}$ leads to  	
	
	\begin{align*}
	\mathfrak{\hat{e}_a}
	&=-(\hat{\W}_2^d{Q_{21}^d}^\T R_1^\T K_{21}R_2 +R_2^\T K_{21}R_1Q_{21}^d\hat{\W}_2^d) \\
	&\quad+ (\hat{\W}_2R_2^\T K_{21}R_1Q_{21}^d +{Q_{21}^d}^\T{R_1^\T}K_{21}{R}_2\hat{\W}_2),
	\end{align*}	
and the substitution of $e_{\W_2}=\W_2-\W_2^d$ allows us to write	
	\begin{align*}
	\mathfrak{\hat{e}_a}
	&=({Q_{21}^d}^\T{R_1^\T}K_{21}{R}_2- R_2^\T K_{21}R_1Q_{21}^d)\hat{\W}_2^d \\ 
	&\quad -\hat{\W}_2^d({Q_{21}^d}^\T{R_1^\T}K_{21}R_2-R_2^\T K_{21}R_1Q_{21}^d) \\
	&\quad +(\hat{e}_{\W_2}R_2^\T K_{21}R_1Q_{21}^d +{Q_{21}^d}^\T{R_1^\T}K_{21}{R}_2\hat{e}_{\W_2})\\
	&= \hat{e}_{21}\hat{\W}_2^d-\hat{\W}_2^d\hat{e}_{21} \\
	&\quad +(\hat{e}_{\W_2}R_2^\T K_{21}R_1Q_{21}^d +{Q_{12}^d}^\T{R_1^\T}K_{21}{R}_2\hat{e}_{\W_2}).
	\end{align*}
Apply \refeqn{hatxy} and \refeqn{xAAx} here. Then removing the hat map on both side of the equation, we obtain 
	\begin{align}
	\mathfrak{e_a}
	&=e_{21}\t\W_2^d + \\
	&\quad \{\tr[R_2^\T K_{21}R_1Q_{21}^d]\I-R_2^\T K_{21}R_1Q_{12}^d\}e_{\W_2} \no\\
	&\triangleq e_{21}\t\W_2^d +E_{\W_2}e_{\W_j}. \label{eq:neweji}
	\end{align}		
where $E_{\W_2}=\{\tr[R_2^\T K_{21}R_1Q_{21}^d]\I-R_2^\T K_{21}R_1Q_{21}^d\}$. Moreover, the Frobenius norm of $E_{\W_2}$ is given by  
	\begin{align}
	\|E_{\W_2}\|_F
	&=\sqrt{\tr[E_{\W_2}^\T E_{\W_2}]} \no\\
	&=\frac{1}{2}\sqrt{\tr[R_2^\T K_{21}R_1Q_{21}^d]^2 +\tr[K_{21}^2]}.  \label{eq:EW}
	\end{align}
Using the fact that $K_{21}=U_{21}G_{21}U^\T_{21}$, we find
	\begin{align*}
	\tr[R_2^\T K_{12}R_1Q_{21}^d]
	&= \tr[R_2^\T U_{21}G_{21}U^\T_{21}R_1Q_{21}^d] \no\\
	&= \tr[G_{21}U^\T_{21}R_1Q_{21}^dR_2^\T U_{21}].
	\end{align*} 	
Let $U^\T_{21}R_1Q_{21}^dR_2^\T U_{21}=\exp(z)\in\SO$ from Rodrigues' formula. Using the MATLAB symbolic tool, we find
	\begin{align*}
	&\tr[U^\T_{12}R_1Q_{12}^d{}^\T R_2^\T U_{12}G_{12}] \\ &=\cos\|z\|\sum_{i=1}^3g_i(1-\frac{z_i}{\|z\|^2}) +\sum_{i=1}^3g_i\frac{x_i^2}{\|x\|^2} \leq \sum_{i=1}^3g_i=\tr[G_{21}],
	\end{align*} 
since $0\leq\frac{z_i^2}{\|z\|^2}\leq1$. Inserting this into \refeqn{EW} and recall that $\tr[K_{21}]=\tr[G_{21}]=k_{21}^\al+k_{21}^\be$, we get	
	\begin{align*}
	\|E_{\W_2}\|\leq \frac{1}{\sqrt{2}}\tr[G_{21}]= \frac{1}{\sqrt{2}}(k_{21}^\al+k_{21}^\be).
	\end{align*}	
Substituting this into \refeqn{neweji} leads to	
	\begin{align}
	\|\mathfrak{e_a}\|
	&\leq \frac{1}{\sqrt{2}}(k_{21}^\al+k_{21}^\be)\|e_{\W_2}\| +\Bd\|e_{21}\|. \label{eq:nea}
	\end{align}	
	
As for $\mathfrak{e_b}$, we can rewrite it as follows	
	\begin{align*}
		\mathfrak{\hat{e}_b} 
		&={Q_{21}^d}^\T{R_1^\T}\dot{K}_{21}R_2 -R_2^\T\dot{K}_{21}{R_1}Q_{21}^d,\\
	\|\mathfrak{\hat{e}_b}\| 
	&= \|({Q_{21}^d}^\T{R_1^\T}\tU_{21}\tG_{21}\tU_{21}^\T R_2 -R_2^\T\tU_{21}\tG_{21}\tU_{21}^\T{R_1}Q_{21}^d\|,
	\end{align*}	
with further rearrangement, we get
	\begin{align*}
	\|\mathfrak{\hat{e}_b}\| 
	&= \| \tU_{21}^\T R_1Q_{21}^d({Q_{12}^d}^\T R_1^\T\tU_{21}\,\tcb{\tG_{21}}\,\tU_{21}^\T R_2 \\ 
	&\quad -R_2^\T\tU_{21}\tcb{\tG_{21}}\,\tU_{21}^\T R_1Q_{21}^d){Q_{12}^d}^\T{R_1^\T}\tU_{21} \|\\
	&= \|(\tcb{\tG}_{21}\tU_{21}^\T R_2{Q_{21}^d}^\T R_1^\T\tU_{21} -\tU_{21}^\T R_1Q_{21}^d R_2^\T\tU_{21}\tcb{\tG_{21}})\| \\
	&\triangleq \|(\tG_{21}\breve{U}_{21} -\breve{U}_{21}^\T\tG_{21})\|
	=\|(\tcb{\tG}_{21}\breve{U}_{21} -\breve{U}_{21}^\T\tcb{\tG_{21}})^\vee\| ,
	\end{align*}	
where we have applied $\|A\|=\|RA\|=\|AR\|$, for any matrix $A\in\Re^{3\t3}$ and $R\in\SO$ and we also let $\breve{U}_{12}=\tU^\T R_2Q_{12}^d R_1^\T\tU\in\SO$. 

Use the property (ii) of Lemma \ref{lem:cite2} that we have developed, that is, let $\mathfrak{e_b}=2e_P'$, $\tilde{G}_{21}=F'$ and $\breve{U}_{21}=P'$. We claim that  
	\begin{align}
	\|\mathfrak{e_b}\|\leq 2\sqrt{\frac{2\tilde{h}_2+\tilde{h}_5}{h_5}}\|e_{21}\| 
	\triangleq B_{21}\|e_{21}\|. \label{eq:eb1}
	\end{align}	
Next, inserting \refeqn{nea}, \refeqn{eb1} into \refeqn{deji} gives rise to
	\begin{align}
	\|\dot{e}_{21}\|
	&\leq \|\mathfrak{e_a}\| +\|\mathfrak{e_b}\| \no\\
	&= \frac{1}{\sqrt{2}}(k_{12}^\al+k_{12}^\be)\|e_{\W_2}\| +(\Bd+B_{21})\|e_{21}\|, \label{eqn:deji2}
	\end{align}	
which shows (iv) directly.		


\bibliography{CDC14}
\bibliographystyle{IEEEtran}

\end{document}